\DeclareMathOperator*{\argmin}{arg\,min}
\newcommand{\ubar}[1]{\underaccent{\bar}{#1}}
\spnewtheorem{assumption}{Assumption}{\bf}{\it}
\spnewtheorem{corrolary}{Corrollary}{\bf}{\it}
\newcommand{\red}[1]{\textcolor{red}{#1}}
\begin{document}

\title{Complexity of linearized quadratic penalty  for  optimization with nonlinear equality constraints
\thanks{The research leading to these results has received funding from: the European Union's Horizon 2020 research and innovation programme under the Marie Sklodowska-Curie
grant agreement no. 953348;  UEFISCDI, Romania, PN-III-P4-PCE-2021-0720, under project L2O-MOC,
no. 70/2022. 
 }}

\titlerunning{Linearized quadratic penalty  for nonconvex optimization}        

\author{Lahcen El Bourkhissi         \and
        Ion Necoara }
        
\authorrunning{L. El Bourkhissi         \and
        I. Necoara}

\institute{Lahcen El Bourkhissi \at
             Department of Automatic Control and Systems Engineering, National University for Science and Technology Politehnica Bucharest, 
              Bucharest, 060042, Romania.\\
              \email{lel@stud.acs.upb.ro}           
           \and
           Ion Necoara \at
             Department of Automatic Control and Systems Engineering, National University for Science and Technology Politehnica Bucharest, 060042 Bucharest and 
             Gheorghe Mihoc-Caius Iacob Institute of Mathematical Statistics and Applied Mathematics of the Romanian Academy, 050711 Bucharest, Romania. \\
             \email{ion.necoara@upb.ro}
}

\date{Received: date / Accepted: date}

\maketitle

\begin{abstract}
In this paper we consider a nonconvex optimization
problem with nonlinear equality constraints. We assume
that both, the objective function and the functional constraints,
are locally smooth. For solving this problem, we propose a linearized quadratic penalty method, i.e., we linearize the objective function and the functional constraints in the penalty formulation at the current iterate and add a quadratic regularization, thus yielding a  subproblem that is easy to solve, and  whose solution is the next iterate. Under a new adaptive regularization parameter choice,  we provide convergence  guarantees for the iterates of this method to an $\epsilon$ first-order optimal solution  in $\mathcal{O}({\epsilon^{-2.5}})$  iterations. Finally, we  show that when the problem data satisfy Kurdyka-Lojasiewicz  property, e.g., are semialgebraic, the whole sequence generated by the proposed algorithm converges and we derive improved local convergence rates depending on the KL parameter. We  validate the theory and the performance of the proposed algorithm by numerically comparing it with some existing methods   from the literature.

\keywords{Nonconvex optimization \and nonlinear functional constraints \and linearized quadratic penalty   \and convergence analysis.}

\subclass{68Q25 \and  90C06 \and 90C30.}
\end{abstract}


\section{Introduction}
\label{intro}

In many fields, including machine learning, matrix optimization, statistics, control and signal processing, a variety of applications can be reformulated as nonconvex optimization problems with nonlinear functional equality constraints, see e.g.,  \cite{Fes:20, LukSab:19, HonHaj:17, Roy:19}. In this paper, we propose an approach to address this optimization problem using a penalty approach.\\

\noindent \textit{Related work:} {The penalty framework holds a key role in the realm of theoretical and numerical optimization, with its history tracing back at least to \cite{Cou:43}. Numerous works have  examined penalty methods for a wide range of  problems, see e.g.,  \cite{PolTre:73,  KonMel:18,IzmSol:23, Pol:87, NocWri:06, Fle:87, FiaMcC:68, CarGou:11,LinMa:22}. For example, in  \cite{IzmSol:23}  a general class of penalty functions of the form $\|\cdot\|_q^q$, with $q>0$,  is considered and bounds on  the distance of the solution of the penalty subproblem to the solution of the original problem are derived in terms of the penalty parameter $\rho$. In particular,  under strict Mangasarian–Fromovitz constraint qualification and second-order sufficiency, a bound of the form $\mathcal{O}({\rho^{1-q}})$ is derived and it becomes zero for $q\in(0,1]$, provided that $\rho$ is sufficiently large. In \cite{CarGou:11}, using a  Lipschitz penalty function  (e.g., Euclidean norm $\|\cdot\|$), a new algorithm is proposed based on the linearization of both, the objective function and the functional constraints, in the penalty subproblem and, additionally,  incorporating a dynamic quadratic regularization. It is shown that this algorithm requires at most $\mathcal{O}({\epsilon^{-2}})$ functions and first derivatives evaluations to obtain a reduction in the size of a first-order criticality measure below some accuracy $\epsilon$. In contrast to \cite{CarGou:11}, in  \cite{KonMel:18} a quadratic penalty method is used  when dealing with nonconvex composite programs subject to  linear constraints.  The study demonstrates that the algorithm, which involves sequentially minimizing the quadratic penalty function, converges to an $\epsilon$ stationary point when using the accelerated composite gradient (ACG) method for solving the subproblem. Notably, this convergence is achieved within $\mathcal{O}(\epsilon^{-3})$ ACG iterations.

\medskip 

\noindent  Furthermore,  in \cite{BirGar:16} a two-phase scaled penalty type algorithm is presented for general nonlinear programs. The first phase finds  an approximate feasible point, while the second phase involves generating a $p$ Taylor approximation of a quadratic penalty function and the next iterate is the solution that satisfies a decrease in this penalty  function. This algorithm requires  \(\mathcal{O}(\epsilon^{1-2(p+1)/p})\) number of evaluations of the problem's functions and derivatives up to order \(p\) to achieve an $\epsilon$  approximate first-order critical point for the original problem. It is noteworthy that when \(p=1\), no hessian information is used and the rate is \(\mathcal{O}(\epsilon^{-3})\).}  Finally, in \cite{LinMa:22} an   inexact proximal-point penalty method is proposed, where at each iteration  one solves inexactly  the quadratic penalty function  plus a proximal term. It is proved that when the objective function and constraints are nonconvex, this method converges to an $\epsilon$ first-order solution in $\mathcal{O}({\epsilon^{-3}})$ iterations, while for nonconvex objective  and convex functional constraints this rate it is improved to $\mathcal{O}({\epsilon^{-2.5}})$.

\medskip 
 
\noindent {Another promising approach when dealing with nonlinear constraints is the augmented Lagrangian (AL), which can be seen as a generalization of the quadratic penalty method. This approach has been extensively explored in the literature for nonlinear programs, as demonstrated in various works \cite{BerTsi:03, BirMar:14, BirMar:20, BoyPar:11, KreMar:00, XieWri:21, CohHal:21, Yas:22, SahEft:19}. In particular, in \cite{BirMar:14} an in-depth analysis of practical AL methods is given and connections between  quadratic penalty and AL methods are established.   Moreover, in the recent study \cite{XieWri:21}, a proximal augmented Lagrangian method is examined for nonconvex, yet smooth, optimization problems.  It is proved that when an approximate first- (second-) order solution of the subproblem is found inexactly, with an error approaching asymptotically zero, then an $\epsilon$ first- (second-) order solution of the original  problem is obtained within $\mathcal{O}(\epsilon^{\eta-2})$ outer iterations, where $\eta \in [0,2]$ is an user-defined parameter.  

\medskip 

\noindent Finally, Sequential Convex Programming (SCP) or inexact restoration techniques, offer an alternative to the penalty/augmented Lagrangian-based approaches  \cite{MesBau:21, TraDie:10,BueMar:20}. More, specifically, SCP solves a sequence of convex approximations of the original problem by linearizing the nonconvex parts of the objective and functional constraints while preserving structures that can be exploited by convex optimization techniques, see e.g., \cite{GraBoy:14,NecKva:15}. However, to the best of our knowledge,  SCP methods converge  under mild assumptions only  locally \cite{MesBau:21,TraDie:10}.  On the other hand, inexact restoration methods are based on a two phase procedure, first phase focuses on seeking a  feasible point, while the second phase aims to generate a point ensuring a decrease in the objective function.  In \cite{BueMar:20} it has been proven that such strategy  yields an approximate KKT point in  $\mathcal{O}\left({\epsilon^{-2}}\right)$ iterations. Nevertheless, each iteration requires solving a two-phase subproblem, which is usually computationally expensive.



\medskip 

\noindent \textit{Contributions:} The adopted approach, the linearized quadratic penalty method (LQP), addresses several key limitations present in the previous works. Notably, in \cite{CarGou:11}, the subproblem is nondifferentiable due to the adoption of a nonsmooth penalty function, and in \cite{KonMel:18}, only linear constraints can be handled. Furthermore, the Proximal AL method in \cite{XieWri:21} incurs high computational costs for solving nonconvex subproblems, and the SCP schemes \cite{MesBau:21, TraDie:10} provide only local convergence guarantees.  Specifically, our main  contributions are:

\begin{itemize}
\item[(i)] 
We linearize the cost function and the nonlinear functional  constraints in the quadratic penalty formulation at each iteration and we consider a {new adaptive}  regularization term. These lead to an algorithm, called the linearized quadratic penalty (LQP) method, which requires solving a simple unconstrained quadratic subproblem, making it easy to solve. Since our method uses only first-order information, it is a first-order type method, but with a better approximation model of the subproblem compared to, e.g., the gradient method \cite{BirGar:16}.

\item[(ii)]  We provide global convergence results for the LQP method. More specifically,  our method guarantees convergence to an $\epsilon$ first-order solution of the original problem in at most $\mathcal{O}({\epsilon^{-2.5}})$  iterations, thus improving the existing bounds. Moreover, under the  Kurdyka-Lojasiewicz (KL) property, we prove convergence of the entire sequence generated by LQP algorithm and derive improved local convergence rates that depend on the KL parameter.

\item[(iii)] Compared to \cite{CarGou:11}, which employs a Lipschitz penalty function and has a total complexity of order \(\mathcal{O}\left(\epsilon^{-3}\right)\)  {when employing smoothing and an accelerated gradient scheme for solving the convex nonsmooth subproblem,} our approach exhibits a total  complexity of order  \(\mathcal{O}\left(\epsilon^{-2.75} \log \left(\epsilon^{-1}\right)\right)\)
{when we employ an accelerated gradient method for solving the corresponding smooth (strongly) convex subproblem}, despite the complexity in evaluating Jacobians  being slightly more favorable for \cite{CarGou:11}. Still comparing the complexity of the subproblems, the algorithms in \cite{XieWri:21, LinMa:22} are difficult to implement in practice due to their high nonconvexity caused by the presence of nonlinear constraints in the subproblem from each iteration. Moreover, unlike \cite{KonMel:18}, our LQP method can handle general nonlinear equality constraints.  Finally, our LQP algorithm benefits from global convergence guarantees, which is an advantage over the SCP methods, for which only local convergence has been established.
\end{itemize}

\noindent These contributions collectively make  LQP method a powerful and versatile tool for efficiently solving smooth nonconvex problems with nonlinear equality constraints, surpassing some limitations of existing techniques. The performance of LQP is also validated through extensive numerical simulations. 

\medskip 

\noindent The paper is structured as follows. In Section \ref{sec2}, we introduce our problem of interest  and some notions necessary for our analysis. In Section \ref{sec3}, we present LQP algorithm, followed in Section \ref{sec4} by its convergence analysis. Finally, in Section \ref{sec5}, we compare numerically our method with existing algorithms. 


\section{Problem formulation and  preliminaries}
\label{sec2}
In this paper, we consider the following nonlinear optimization problem:
\begin{equation}
\begin{aligned}\label{eq1}
& \underset{x\in\mathbb{R}^n}{\min}
& & f(x)\\
& \hspace{0.2cm}\textrm{s.t.}
& & \hspace{0.07cm} F(x)=0,
\end{aligned}
\end{equation}
where $f:\mathbb{R}^{n}\to {\mathbb{R}}$ and  $F(x)\triangleq{(f_1(x),...,f_m(x))}^T$, with $f_i:\mathbb{R}^{n}\to {\mathbb{R}}$ for all $i=1:m$. We assume the functions  $f, f_i$  are continuously differentiable for all  $i=1:m$, $f$ is  nonconvex  and $F$ is nonlinear. {Moreover, we assume that the problem is well-posed, i.e., the feasible set is nonempty and the optimal value is finite. 
Before introducing the main assumptions for our analysis, we would like to clarify some notations.  We use $\|.\|$ to denote  the 2-norm of a vector or of a matrix, respectively.  For a differentiable function $f:\mathbb{R}^n\to\mathbb{R}$, we denote by $\nabla f(x)\in\mathbb{R}^n$ its gradient at a point $x$. For a differentiable vector function $F:\mathbb{R}^n  \to\mathbb{R}^m$, we denote its Jacobian at a given point $x$ by ${J}_F(x)\in\mathbb{R}^{m\times n}$. Let us now present the main assumptions considered  for problem \eqref{eq1}:

\begin{assumption}\label{assump1}
Assume that $f(x)$ has  compact level sets, i.e., for any $\alpha\in\mathbb{R}$, the following set is either empty or compact:
\[
\mathcal{S}_{\alpha}^0\triangleq{\{x:\, f(x)\leq\alpha\}}.
\]
\end{assumption}
\begin{assumption}\label{assump2}
Given a compact set $\mathcal{S}\subseteq\mathbb{R}^n$, there exist positive constants $ M_F, \sigma, L_f, L_F$ such that $f$ and $F$ satisfy the following conditions:
\begin{enumerate}[(i)]
  \item $ \|\nabla f(x)-\nabla f(y)\|\leq L_f\|x-y\| \;\text{ for all } x, y\in\mathcal{S}$.\label{ass1}
  \item $  \|J_F(x)\|\leq M_F, \hspace{0.5cm}\|J_F(x)-J_F(y)\|\leq L_F\|x-y\|\;\text{ for all } x, y\in\mathcal{S}$. \label{ass2}
  \item Problem  \eqref{eq1} satisfies Linear Independance Constraint Qualification (LICQ) condition  \text{ for all } $x\in\mathcal{S}$\label{ass3}.
\end{enumerate}
\end{assumption}

\begin{assumption}\label{assump3}
There exists finite $\bar{\alpha}$ such that $f(x)\leq\bar{\alpha}$ for all $x\in\{x:\, \|F(x)\|\leq1\}$.
\end{assumption}

\noindent Note that these assumptions are standard in the nonconvex optimization literature, in particular in penalty-type methods, see e.g., \cite{CarGou:11,XieWri:21,CohHal:21}. In fact, these assumptions, except LICQ (although it is also standard in nonconvex optimization), are not too restrictive as they need to hold only locally. Any twice-differentiable function is Lipschitz on a compact set, and its gradient is also Lipschitz on any compact set. Indeed, large classes of functions satisfy these assumptions, as discussed below.

\begin{remark}
Assumption \ref{assump1} holds e.g., when $f(x)$ is coercive ({in particular,} $f(x)$ is strongly convex),  or $f(x)$ is bounded {from} bellow. 
\end{remark}

\begin{remark}\label{remark_LICQ}
Assumption \ref{assump2} allows general classes of problems. In particular, condition \textit{(\ref{ass1})} holds if $f(x)$ is differentiable and $\nabla f(x)$ is \textit{locally}  Lipschitz continuous on a neighborhood of $\mathcal{S}$. Conditions \textit{(\ref{ass2})} hold when $F(x)$ is differentiable on a neighborhood of $\mathcal{S}$ and $J_F(x)$ is \textit{locally} Lipschitz continuous on $\mathcal{S}$. {Finally, the LICQ assumption guarantees the existence of dual multipliers and is commonly used in nonconvex optimization, see e.g.,  \cite{NocWri:06, XieWri:21}, as it implies that there exists $ \sigma > 0 $ such that $ \sigma_{\text{min}}(J_F(x)) \geq \sigma, \forall x \in \mathcal{S} $, where $\sigma_{\text{min}}(J_F(x))$ denotes the smallest singular value of $J_F(x)$. }
\end{remark}

\begin{remark}
For  Assumption \ref{assump3} to hold, it is sufficient the set $\{x: \,\|F(x)\|\leq 1\}$ to be compact. In fact, we do not need this assumption if we can choose the starting point $x_0$ such that $F(x_0)=0$, i.e., the initial point is feasible.
\end{remark}

\noindent The following lemma is an immediate consequence of Assumption \ref{assump1}.
 \begin{lemma} If  Assumption \ref{assump1} holds, then for any $\rho\geq 0$, we have:
\begin{equation}\label{lem1}
 \bar{L}\triangleq{\inf_{x\in\mathbb{R}^n}\{ f(x)+\frac{\rho}{2}\|F(x)\|^2\}}>-\infty \quad \text{ and } \quad     \bar{f}\triangleq{\inf_{x\in\mathbb{R}^n}\{ f(x) \}}>-\infty.   
\end{equation}
\end{lemma}

\noindent We are interested in (approximate) first-order (KKT) solution of \eqref{eq1}.  Hence,   let us  introduce the following definitions: 
\begin{definition}\label{firstorder}[First-order solution  and $\epsilon$ first-order solution of \eqref{eq1}]. 
The vector $x^*$ is a first-order (KKT) solution of \eqref{eq1} if  $\exists\lambda^*\in\mathbb{R}^m$ such that: 
\begin{equation*}
\nabla f(x^*)+J_F(x^*)^T\lambda^*=0\hspace{0.3cm} \text{and} \hspace{0.3cm} F(x^*)=0.
\end{equation*} 
Moreover, $x^*$ is an $\epsilon$ first-order solution of \eqref{eq1} if  exist $\lambda^*\in\mathbb{R}^m$ and two positive constants $c_1$ and $c_2$ such that:
\begin{equation*}
    \|\nabla f(x^*)+J_F(x^*)^T\lambda^*\|\leq c_1 \epsilon \hspace{0.3cm} \text{and} \hspace{0.3cm} \|F(x^*)\|\leq c_2 \epsilon.
\end{equation*} 
\end{definition}

\noindent Finally, let us  introduce the  Kurdyka-Lojasiewicz (KL) property, which leads to improvements in the convergence rates of LQP algorithm. In particular, its incorporation will prove useful in improving the global performance of LQP algorithm.  Let	 $\Phi: \mathbb{R}^d \to \bar{\mathbb{R}}$ be a proper lower semicontinuous function. For $−\infty < \tau_1 < \tau_2 \leq
 +\infty$, we define $[\tau_1<\Phi<\tau_2]=\{x \in\mathbb{R}^d :  tau_1<\Phi(x)<\tau_2\}$. 
\begin{definition} \label{def2}
Let $\Phi : \mathbb{R}^d \to \bar{\mathbb{R}}$  be a proper lower semicontinuous function that takes constant value on a set $\Omega$. We say that $\Phi$ satisfies the KL property on $\Omega$ if there exists $ \epsilon>0, \tau>0$ and $\varphi\in\Psi_{\tau}$ (where  $\Psi_{\tau}$ denotes the set of all continuous concave functions $\varphi: [0, \tau] \to [0,+\infty)$ satisfying $\varphi(0) = 0$ and $\varphi$ is continuously differentiable on $(0, \tau)$, with $\varphi' > 0$ over $(0, \tau)$) such that for every
$x^* \in \Omega$ and every element $x$ in the intersection $\{x\in\mathbb{R}^d: \text{ dist}(x,\Omega)<\epsilon\}\cap[\Psi(x^*)<\Psi(x)<\Psi(x^*)+\tau]$, we have:
\[
    \varphi'\big(\Phi(x) − \Phi(x^*)\big)\text{dist}\big(0, \partial\Phi(x)\big) \geq1.
\] 
\end{definition}
This definition covers many classes of functions arising in practical optimization problems. For example, if $f$ is a proper closed semialgebraic function, then $f$ is a KL function with exponent $\nu\in[0,1)$, see \cite{AttBol:13}. The function $g(Ax)$, where $g$ is strongly convex on a compact set and  twice differentiable, and $A \in \mathbb{R}^{m\times n}$, is
a KL function. Convex piecewise linear/quadratic functions such as $\|x\|_{1}, \|x\|_{0}, \gamma\sum_{i=1}^{k}|x_{[i]}|$, 
where $|x_{[i]}|$ is the $i$th largest  entry in $x, \;k\leq n$ and $\gamma \in (0, 1]$; the indicator function $\delta_{\Delta}(x)$, where $\Delta= \{x\in\mathbb{R}^n : e^T x = 1, x \geq 0\}$;  least-squares problems with the Smoothly Clipped Absolute Deviation (SCAD) \cite{Fan:97};  Minimax Concave Penalty (MCP)  regularized functions \cite{Zha:10} are all KL functions. 


\section{A linearized quadratic penalty method}\label{sec3}
In this section, we propose a new algorithm for solving nonconvex problem \eqref{eq1} using the quadratic penalty framework. Let us first introduce few  notations. The penalty function associated with the problem \eqref{eq1} is 
\begin{equation}\label{penalty_function}
    \mathcal{P}_{\rho}(x)=f(x)+\frac{\rho}{2}{\|F(x)\|^2}.
\end{equation}
The gradient of $\mathcal{P}_{\rho}$ is: $\nabla\mathcal{P}_{\rho}(x)={\nabla f(x)+J_F(x)}^T\left(\rho F(x)\right)$.  Further, let us denote the following function derived from linearization of  the objective function and the functional constraints, at a given point $\bar{x}$, in the penalty function:
\begin{align*}
 \bar{\mathcal{P}}_{\rho}(x;\bar{x})=f(\bar{x})+\langle\nabla f(\bar{x}),x-\bar{x}\rangle +\frac{\rho}{2}{\|F(\bar{x})+J_F(\bar{x})(x-\bar{x})\|^2}.   
\end{align*} 
\noindent Below, we also use the  following notations:
\begin{gather*}
     l_f(x;\bar{x}):=f(\bar{x})+\langle\nabla f(\bar{x}),x-\bar{x}\rangle,\quad    l_F(x;\bar{x}):=F(\bar{x})+ J_F(\bar{x})(x-\bar{x}) \hspace{0.5cm}\forall x,\bar{x}.
\end{gather*}

\noindent To solve the optimization problem \eqref{eq1} we propose the following \textit{Linearized Quadratic Penalty} (LQP) algorithm, i.e., we linearize the objective function and the functional constraints in the penalty function at the current iterate and add an \textit{adaptive} quadratic regularization.
\begin{algorithm}
\caption{Linearized quadratic penalty (LQP) method}\label{alg1}
\begin{algorithmic}[1]
\State  $\textbf{Initialization: } x_0, \ubar{\beta}>0 \text{ and } \rho>0$.
\State $k \gets 0$
\While{$\text{ stopping criterion is not satisfied }$}
    \State $\text{generate a proximal parameter } \beta_{k+1}\geq\ubar{\beta}$ such that
    \State $x_{k+1}\gets\argmin_{x\in\mathbb{R}^n}{\bar{\mathcal{P}}_{\rho}(x;x_{k})+\frac{\beta_{k+1}}{2}{\|x-x_{k}\|}^2}$ satisfies the following descent:
    \begin{equation}\label{decrease}
           \mathcal{P}_{\rho}(x_{k+1})\leq\mathcal{P}_{\rho}(x_{k})-\frac{\beta_{k+1}}{2}\|x_{k+1}-x_{k}\|^2.
    \end{equation}
    \State $k \gets k+1$
\EndWhile
\end{algorithmic}
\end{algorithm}

\noindent
Note that  the objective function in the subproblem of step 5 of  Algorithm \ref{alg1} is always unconstrained,  strongly convex and quadratic. Therefore, finding a solution of the subproblem in step 5 is equivalent to solving a linear system of equalities for which  efficient solution methods exist, i.e.:
\[x_{k+1}=x_k-(\rho J_F(x_k)^TJ_F(x_k)+\beta_{k+1}I_n)^{-1}(\nabla f(x_k)+\rho J_F(x_k)^TF(x_k)).\] 
Note our results still hold when the subproblem in Step 4 is solved inexactly.
\begin{remark}
   Note that, from the previous relation, when the objective function is null and $\rho=1$, the update of $x_{k+1}$ takes the form:
    \[x_{k+1} = x_k - (J_F(x_k)^T J_F(x_k) + \beta_{k+1}I_n)^{-1}(J_F(x_k)^T F(x_k)),\] 
    which corresponds  to the Levenberg-Marquardt method \cite{NocWri:06}. Moreover, when  $\beta_{k+1}=0$, we obtain the following update:
     \[x_{k+1} = x_k - ( J_F(x_k)^T J_F(x_k) )^{-1} J_F(x_k)^T F(x_k),\] 
     which yields the  Gauss-Newton method \cite{NocWri:06}.  Hence,  our  algorithm can be seen as a quasi-Newton method for minimizing the quadratic penalty function $\mathcal{P}_{\rho}$ defined in \eqref{penalty_function}, where the approximation of the Hessian at iteration $k$ is based on  first-order derivatives and is given by $\rho J_F(x_k)^T J_F(x_k) + \beta_{k+1}I_n$. This special approximation of the Hessian in our quasi-Newton type method allows us to derive global convergence results and rates under  general conditions and under the KL property.
\end{remark}

\noindent Let us show that we can always choose  {an adaptive regularization parameter} \(\beta_{k+1}\) guaranteeing  the descent property \eqref{decrease}. Indeed, since $f$ and $F$ are smooth functions, if one chooses:

\vspace{-0.3cm}
 
\begin{equation} \label{eq_assu}
    \beta_{k+1} \geq L_f + L_F\sqrt{2\rho}\sqrt{\mathcal{P}_{\rho}(x_k)-\bar{f}},
\end{equation} 
which depends on the current iterate $x_k$ then the  descent property \eqref{decrease} follows (hence, the right hand side in (5) decreases along the iterations of LQP algorithm) and this is proved in the following lemma.  In the rest of this paper, for the sake of clarity, we provide the proofs of all the lemmas in Appendix.
\begin{lemma}\label{lemma3}[Existence of $\beta_{k+1}$]  If  the sequence $\{x_{k}\}_{k\geq0}$ generated by Algorithm \ref{alg1} is in some compact set $\mathcal{S}$ on which Assumptions \ref{assump1} and \ref{assump2} hold and we choose $\beta_{k+1}$  as in \eqref{eq_assu}, then the descent property \eqref{decrease} is valid.
\end{lemma}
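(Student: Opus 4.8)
The plan is to combine the first-order optimality condition of the quadratic subproblem defining $x_{k+1}$ with the descent inequality \eqref{eq_assu}, which itself is a direct consequence of the local smoothness of $\mathcal{P}_{\rho}$ established in Lemma \ref{lemma2}. The existence of an admissible $\beta_{k+1}$ in the range $[\ubar{\beta}, L_{\mathcal{P}_{\rho}}]$ satisfying \eqref{eq_assu} is guaranteed precisely because the sequence stays in the compact set $\mathcal{S}$ on which Assumption \ref{assump2} holds, so that $\nabla\mathcal{P}_{\rho}$ is Lipschitz there; this is exactly why the hypothesis $\{x_k\}_{k\geq 0}\subseteq\mathcal{S}$ is imposed.

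First I would write the stationarity condition for step 5. Since the subproblem objective $\bar{\mathcal{P}}_{\rho}(\cdot\,;x_k)+\frac{\beta_{k+1}}{2}\|\cdot-x_k\|^2$ is strongly convex and quadratic, its unique minimizer $x_{k+1}$ annihilates the gradient, giving
\[
\nabla f(x_k) + \rho J_F(x_k)^T\big(F(x_k) + J_F(x_k)(x_{k+1}-x_k)\big) + \beta_{k+1}(x_{k+1}-x_k) = 0.
\]
Recognizing that $\nabla\mathcal{P}_{\rho}(x_k)=\nabla f(x_k)+\rho J_F(x_k)^TF(x_k)$, I would rewrite this compactly as
\[
\nabla\mathcal{P}_{\rho}(x_k) = -\big(\rho J_F(x_k)^TJ_F(x_k) + \beta_{k+1}I_n\big)(x_{k+1}-x_k).
\]

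Next, taking the inner product of this identity with $d\triangleq x_{k+1}-x_k$ yields the exact value
\[
\langle\nabla\mathcal{P}_{\rho}(x_k),d\rangle = -\rho\|J_F(x_k)d\|^2 - \beta_{k+1}\|d\|^2.
\]
Substituting this into \eqref{eq_assu}, namely $\mathcal{P}_{\rho}(x_{k+1})\leq\mathcal{P}_{\rho}(x_k)+\langle\nabla\mathcal{P}_{\rho}(x_k),d\rangle+\frac{\beta_{k+1}}{2}\|d\|^2$, I obtain
\[
\mathcal{P}_{\rho}(x_{k+1}) \leq \mathcal{P}_{\rho}(x_k) - \rho\|J_F(x_k)d\|^2 - \frac{\beta_{k+1}}{2}\|d\|^2.
\]
Discarding the nonnegative term $\rho\|J_F(x_k)d\|^2$ then gives exactly the claimed descent \eqref{decrease}.

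There is no serious analytic obstacle here; the argument is a short algebraic combination in which the quadratic regularization term $\frac{\beta_{k+1}}{2}\|d\|^2$ from the subproblem overpowers the corresponding term produced by the descent lemma. The only point requiring genuine care is the legitimacy of \eqref{eq_assu}: it invokes the Lipschitz constant $L_{\mathcal{P}_{\rho}}$ of $\nabla\mathcal{P}_{\rho}$ evaluated at both $x_k$ and $x_{k+1}$, and this constant from Lemma \ref{lemma2} is finite only on the compact set $\mathcal{S}$. Thus the whole argument hinges on the standing hypothesis that the entire iterate sequence remains in $\mathcal{S}$, which simultaneously ensures that $\beta_{k+1}$ can indeed be selected with $\beta_{k+1}\leq L_{\mathcal{P}_{\rho}}$, as asserted just before the lemma.
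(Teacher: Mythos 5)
Your proposal is correct and follows essentially the same route as the paper's own proof: both derive the stationarity condition of the strongly convex quadratic subproblem, compute $\langle\nabla\mathcal{P}_{\rho}(x_k),\,x_{k+1}-x_k\rangle=-\rho\|J_F(x_k)(x_{k+1}-x_k)\|^2-\beta_{k+1}\|x_{k+1}-x_k\|^2$, and combine this with \eqref{eq_assu} before dropping the nonnegative term $\rho\|J_F(x_k)(x_{k+1}-x_k)\|^2$ to obtain \eqref{decrease}. The only cosmetic difference is that the paper discards that term one line earlier, inside the inner-product bound, while you carry it into the final inequality; the substance is identical.
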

\begin{proof}
See Appendix.
\end{proof}
 \noindent Note that the choice of $\beta_{k+1}$ in \eqref{eq_assu} is inspired by \cite{MarOku:24} and the proof of the above lemma follows similar arguments as in \cite{MarOku:24}.  Lemma \ref{lemma3} proves that LQP algorithm is implementable, since choosing $\beta_{k+1}$ as in \eqref{eq_assu} already guarantees the descent in \eqref{decrease}. In practice,  the regularization parameter $\beta_k$ can be determined using a backtracking scheme see Algorithm \ref{backtracking}. 

\begin{algorithm}
\caption{Backtracking procedure}\label{backtracking}
\begin{algorithmic}[1]
\State  $\textbf{Initialization: } \mu>1 \text{ and } \beta^0_{k+1}=\max\{\beta_k/\mu, \underline{\beta}\}$.
\State $i \gets 0$
\While{$\text{ \eqref{decrease} is not satisfied  for $\beta_{k+1}=\beta^i_{k+1}$}$}
    \State $ \beta^{i+1}_{k+1}\gets\mu\beta^{i}_{k+1}$
    \State $x^{i+1}_{k+1}\gets\argmin_{x\in\mathbb{R}^n}{\bar{\mathcal{P}}_{\rho}(x;x_{k})+\frac{\beta^{i+1}_{k+1}}{2}{\|x-x_{k}\|}^2}$
    \State $i \gets i+1$
\EndWhile
\State $ \beta_{k+1}\gets\beta^{i+1}_{k+1}, \quad x_{k+1}\gets x^{i+1}_{k+1}$
\end{algorithmic}
\end{algorithm}
\color{black}

\noindent Specifically, at iteration $k$ we start with an initial value $\beta^0_{k+1} = \max\{\beta_k/\mu, \underline{\beta}\}$ for some $\mu>1$ and  generate the new iterate $x_{k+1}$ by solving the subproblem in Step 5 of Algorithm \ref{alg1}. Next, we test whether \eqref{decrease} is satisfied or not. If it is satisfied, we proceed to Step 6 of Algorithm \ref{alg1}. However, if \eqref{decrease} is not satisfied, we geometrically increase $\beta^0_{k+1}$ by a geometric factor (with inner values of the form $\beta^i_{k+1} := \mu^i \beta^0_{k+1}$, with $i \geq 0$), until \eqref{decrease} is satisfied. This process finishes in a finite number of steps, as proved in Lemma \ref{lemma3}. The value of $\beta^i_{k+1}$ for which \eqref{decrease} is satisfied, denoted  $\beta^{i_0}_{k+1}$, is then chosen as the value of $\beta_{k+1}$. 
}

\medskip

\noindent {There are good reasons for adopting a dynamic regularization $\beta_{k+1}$. One of them is that, in general, the parameters of the problem are usually unknown. Another reason is that, since $\beta_{k+1}$ is  {usually} generated based on the  {inequality \eqref{decrease}}, which must be tested only for the point $x_{k+1}$, this might results in  $\beta_k$'s that are much smaller than the bound in  \eqref{eq_assu}.} 
In the sequel we also denote: 
\[
\Delta x_{k}=x_{k}-x_{k-1} \hspace{0.3cm} \forall k\geq1.
\]

\section{Convergence analysis}\label{sec4}
In this section, we prove the effeciency of LQP algorithm (Algorithm \ref{alg1})  to obtain an $\epsilon$ first-order solution for  problem \eqref{eq1} and we derive  improved {local} convergence rates under the KL condition.  Through our analysis, we use a Lyapunov approach to prove the convergence of LQP algorithm.  
\noindent  Bellow, we are using the quadratic penalty function, $\mathcal{P}_{\rho}$,  as a Lyapunov function.  The evaluation of the Lyapunov function along the iterates of LQP algorithm  is denoted by:
\begin{equation}\label{lyapunov_function}
 P_{k}=\mathcal{P}_{\rho}(x_k) \hspace{0.3cm} \forall k\geq0.
\end{equation}

\noindent  It is clear from Lemma \ref{lemma3}  that $\{P_{k}\}_{k\geq0}$ is decreasing. 
In the sequel, we assume that $x_0$ is chosen such that:
\begin{equation}\label{eq4}
    \|F(x_0)\|^2\leq\min\left\{1,\frac{2c_0}{\rho}\right\} \hspace{0.7cm} \text{ for some } c_0>0,
\end{equation}
and that $f(x_0)\leq\bar{\alpha}$.
Using the definition of $\mathcal{P}_{\rho}$, we have: 
 \begin{align}
    \mathcal{P}_{\rho}(x_0)&=f(x_0)+\frac{\rho}{2}\|F(x_0)\|^2{\overset{{\eqref{eq4}}}{\leq}} \alpha := \bar{\alpha}+c_0 \label{ine10}.
\end{align} 

\noindent In fact, if  the sequence $\{x_{k}\}_{k\geq0}$ generated by Algorithm \ref{alg1} is in some compact set $\mathcal{S}$ on which Assumptions \ref{assump1}, \ref{assump2} and \ref{assump3} hold and, moreover,  $x_0$ is chosen as in \eqref{eq4} with $\rho\geq 1$, then we have the following bounds on $P_k$:
\begin{equation}\label{important}
    \bar{L}  \leq P_{k}\leq \alpha  \hspace{0.5cm}\forall k\geq0,
\end{equation}
where $\bar{L}$ is  defined in \eqref{lem1} and $\alpha$ as in \eqref{ine10}. 
\noindent Let us now bound the gradient of the penalty function.

\begin{lemma}\label{bounded_gradient}[Boundedness of $\nabla\mathcal{P}_{\rho}$]
If  the sequence $\{x_{k}\}_{k\geq0}$ generated by Algorithm \ref{alg1} is in some compact set $\mathcal{S}$ on which Assumptions \ref{assump1} and \ref{assump2} hold, then we have:
\begin{align}\label{bounde_gradient1}
    &\|\nabla\mathcal{P}_{\rho}(x_{k+1})\|  \nonumber \\
    & \leq \! \left( \! L_f \!+\! L_F \sqrt{2\rho} \sqrt{\mathcal{P}_{\rho}(x_k) \!-\! \bar{f}} \!+ \beta_{k+1} \! \right) \! \|\Delta x_{k+1}\| + \frac{\rho M_FL_F}{2} \|\Delta x_{k+1}\|^2.
\end{align}
Additionally, we can also get the following bound:
\begin{equation}\label{bounde_gradient}
    \|\nabla\mathcal{P}_{\rho}(x_{k+1})\| \!\leq\! \left( \! L_f \!+\! L_F \sqrt{2\rho} \sqrt{\mathcal{P}_{\rho}(x_k) \!-\! \bar{f}} \!+\! \beta_{k+1} \!+\! 2 \rho M_F^2  \! \right) \! \|\Delta x_{k+1}\|.
\end{equation}
\end{lemma}
\begin{proof}
See Appendix.
\end{proof}

\noindent From Lemma \ref{lemma3} it follows  that when using a backtracking scheme, such as Algorithm \ref{backtracking}, $\beta_{k+1}$ can be always upper bounded as follows:
\begin{equation} 
\label{bar_gamma}
    \bar{\beta}\triangleq \sup_{k\geq 1}\beta_k= \mu \left(L_f+ L_F\sqrt{2\rho}\sqrt{\alpha-\bar{f}}\right).
\end{equation}


\subsection{Global convergence rate} 
In this section a global convergence rate is given for the iterates generated by  Algorithm \ref{alg1} to an $\epsilon$ first-order (KKT) solution\footnote{The main difference between this paper and the journal version (Journal of Global Optimization, doi: 10.1007/s10898-024-01456-3, 2024) is that in the next convergence results  we do not require  the regularization parameter $\beta_{k+1}$ to  satisfy  \eqref{eq_assu}. This is useful when the problem's parameters $(L_f, L_F,\bar{f})$ are unknown. However, in this case we need to consider $\ubar{\beta} = \mathcal{O}\left(\sqrt{\rho}\right)$ in order to preserve the complexity  $\mathcal{O}(\epsilon^{-2.5})$.}. 

\begin{theorem}\label{main_result}[First-order complexity]
Consider Algorithm \ref{alg1} and let  $\{P_{k}\}_{k\geq0}$ be defined as in \eqref{lyapunov_function}. If  the sequence $\{x_{k}\}_{k\geq0}$ generated by Algorithm \ref{alg1} is in some compact set $\mathcal{S}$ on which Assumptions \ref{assump1}, \ref{assump2} and \ref{assump3} hold, then for any accuracy $\epsilon>0$,  choosing  $\rho = \max\{1, \mathcal{O}(\epsilon^{-1})\}, \ubar{\beta} = \mathcal{O}\left(\sqrt{\rho}\right)$  and $x_0$  as in \eqref{eq4}, Algorithm \ref{alg1} yields an $\epsilon$ first-order solution of \eqref{eq1} after $K=\mathcal{O}(\epsilon^{-2.5})$  iterations. Moreover, any limit point $x_{\rho}^*$ of the sequence $\{x_{k}\}_{k\geq0}$  satisfies:
\[
\nabla f(x_{\rho}^*) + {J_F(x_{\rho}^*)}^T \lambda_{\rho}^* = 0 \quad \text{ and } \quad \| F(x_{\rho}^*)\| = \frac{\|\lambda_{\rho}^*\|}{\rho} \leq \mathcal{O}(\epsilon).
\]
\end{theorem}
\begin{proof}
Let $k^*\geq 0$ be the first integer such that:
\begin{equation}
    \|\nabla\mathcal{P}_{\rho}(x_{k^*+1})\|\leq \epsilon.
\end{equation}
Using  \eqref{bounde_gradient1} from Lemma \ref{bounded_gradient}, it follows that at each iteration, $k \in [0:k^*]$, we encounter one of the following two cases:\\
\noindent \textit{Case 1: } If the following holds: $${\left(L_f+ L_F \sqrt{2\rho} \sqrt{P_k - \bar{f}} + \beta_{k+1}\right)\|\Delta x_{k+1}\|\geq \frac{\rho M_F L_F}{2}\|\Delta x_{k+1}\|^2},$$ then we get:
\begin{align} \label{case1}
& \|\nabla\mathcal{P}_{\rho}(x_{k+1})\|^2 \overset{\eqref{important}}{\leq} 2 \left(L_f+ L_F \sqrt{2\rho} \sqrt{\alpha - \bar{f}} + \beta_{k+1}\right)^2 \|\Delta x_{k+1}\|^2 \nonumber\\
&\overset{\eqref{decrease}}{\leq} \frac{4\left(L_f+ L_F \sqrt{2\rho} \sqrt{\alpha - \bar{f}} + \beta_{k+1}\right)^2}{\beta_{k+1}}\left(\mathcal{P}_{\rho}(x_{k}) - \mathcal{P}_{\rho}(x_{k+1})\right).
\end{align}
\noindent \textit{Case 2: } Otherwise,  the following is valid: 
$$\left(L_f+ L_F \sqrt{2\rho} \sqrt{P_k - \bar{f}} + \beta_{k+1}\right)\|\Delta x_{k+1}\|< \frac{\rho M_F L_F}{2}\|\Delta x_{k+1}\|^2,$$ 
which yields:
\begin{equation}\label{case2}
\|\nabla\mathcal{P}_{\rho}(x_{k+1})\|\leq \rho M_FL_F \|\Delta x_{k+1}\|^2\overset{\eqref{decrease}}{\leq} \frac{2\rho M_FL_F}{\beta_{k+1}}\left(\mathcal{P}_{\rho}(x_{k}) \!-\! \mathcal{P}_{\rho}(x_{k+1})\right).
\end{equation}
Define $\mathcal{I}_1$ as the set of iterations $k \in [0:k^*-1]$ at which   \textit{Case 1} holds, and  $\mathcal{I}_2$ as the set of iterations $k\in [0:k^*-1]$ at which   \textit{Case 2} holds. Clearly: $k^*=|\mathcal{I}_1|+|\mathcal{I}_2|+1$. We first derive an upper bound for $|\mathcal{I}_1|$. Summing \eqref{case1} over $\mathcal{I}_1$ yields:
\begin{align*}
 |\mathcal{I}_1|\epsilon^2  & < \sum_{k\in\mathcal{I}_1} \|\nabla\mathcal{P}_{\rho}(x_{k+1})\|^2  \\
& \leq  \sum_{k\in\mathcal{I}_1} \frac{4\left(L_f+ L_F \sqrt{2\rho} \sqrt{\alpha - \bar{f}} + \beta_{k+1}\right)^2}{\beta_{k+1}}\left(P_k - P_{k+1}\right)\\
   & \leq \frac{4\left(L_f+ L_F \sqrt{2\rho} \sqrt{\alpha - \bar{f}} + \bar{\beta}\right)^2}{\ubar{\beta}} \sum_{k=0}^{k^*\!-1} \left(P_k - P_{k+1}\right)  \\
   &\leq \frac{4\left(L_f+ L_F \sqrt{2\rho} \sqrt{\alpha - \bar{f}} + \bar{\beta}\right)^2}{\ubar{\beta}} \left(P_0 - P_{k^*}\right)  \\ 
   & {\overset{{\eqref{important} }}{\leq}}\frac{4\left(L_f+ L_F \sqrt{2\rho} \sqrt{\alpha - \bar{f}} + \bar{\beta}\right)^2}{\ubar{\beta}} \left(\alpha-\bar{L}\right).
\end{align*}
Thus, we have: ${|\mathcal{I}_1|< \frac{4\left(L_f+ L_F \sqrt{2\rho} \sqrt{\alpha - \bar{f}} + \bar{\beta}\right)^2}{\ubar{\beta}\epsilon^2} \left(\alpha-\bar{L}\right)}$. Similarly, we derive an upper bound for $|\mathcal{I}_2|$. Summing  \eqref{case2} over $\mathcal{I}_2$ yields:
\begin{align*}
 &  |\mathcal{I}_2|\epsilon  < \sum_{k\in\mathcal{I}_2} \|\nabla\mathcal{P}_{\rho}(x_{k+1})\|^2  \leq  \sum_{k\in\mathcal{I}_2} \frac{2\rho M_FL_F}{\beta_{k+1}}\left(P_k - P_{k+1}\right)\\
   & \leq  \sum_{k=0}^{k^*\!-1} \frac{2\rho M_FL_F}{\beta_{k+1}}\left(P_k - P_{k+1}\right)  \leq \frac{2\rho M_FL_F}{\ubar{\beta}} \left(P_0 - P_{k^*}\right){\overset{{\eqref{important} }}{\leq}}\frac{2\rho M_FL_F}{\ubar{\beta}} \left(\alpha-\bar{L}\right).
\end{align*}
Therefore, we obtain: $|\mathcal{I}_2|< \frac{2\rho M_FL_F}{\ubar{\beta}\epsilon} \left(\alpha-\bar{L}\right)$.  Consequently, we have:
\[
k^* \leq \left(\alpha-\bar{L}\right) \left({\frac{4\left(L_f+ L_F \sqrt{2\rho} \sqrt{\alpha - \bar{f}} + \bar{\beta}\right)^2}{\ubar{\beta}\epsilon^2}} + \frac{2\rho M_FL_F}{\ubar{\beta}\epsilon} \right).
\]
Choosing the penalty parameter  $\rho=\mathcal{O}(\epsilon^{-1})$, from \eqref{bar_gamma} we also have that $\bar{\beta}=\mathcal{O}(\sqrt{\rho}) = \mathcal{O}(\sqrt{\epsilon^{-1}})$ {and choosing $\ubar{\beta}= \mathcal{O}(\sqrt{\rho}) = \mathcal{O}(\sqrt{\epsilon^{-1}})$}, it follows that $k^*$ is of order {$k^*=\mathcal{O}\left(\frac{\epsilon^{-1}}{\sqrt{\epsilon^{-1}}\epsilon^2} + \frac{\epsilon^{-1}}{\epsilon}\right) = \mathcal{O}\left(\frac{1}{\epsilon^{2.5}} +  \frac{1}{\epsilon^2} \right)$}.  Therefore,  we have $\|\nabla\mathcal{P}_{\rho}(x_{k^*+1})\|\leq\epsilon$ within $k^* =\mathcal{O}(\frac{1}{\epsilon^{2.5}}) $ iterations, i.e.: 
\begin{align}\label{KKT1}
    \|\nabla f(x_{k^*+1})+J_F(x_{k^*+1})^T\lambda_{k^*+1}\|\leq\epsilon, \text{ where } \lambda_{k^*+1}=\rho F(x_{k^*+1}).
\end{align}
Below,  we show that $\lambda_{k^*+1}$ is bounded.  Indeed, since  LICQ holds on $\mathcal{S}$ and since $x_{k^*+1} \in \mathcal{S}$, it follows from Remark \ref{remark_LICQ} that
there exists $ \sigma > 0 $ such that $\sigma \|F(x_{k^*+1})\|\leq \|J_F(x_{k^*+1})^TF(x_{k^*+1})\| $. Then, using  the triangle inequality in \eqref{KKT1}, we obtain that:
\[
\sigma\|\lambda_{k^*+1}\|=\sigma\|\rho F(x_{k^*+1})\|\leq\rho\|J_F(x_{k^*+1})^TF(x_{k^*+1})\|\leq\|\nabla f(x_{k^*+1})\|+\epsilon.
\]
Given the continuity of  $\nabla f$, along with the fact that $x_{k^*+1}$ belongs to the compact set $\mathcal{S}$, we conclude that there exists a constant $M\geq 0$ such that
$\|\lambda_{k^*+1}\|\leq \frac{M}{\sigma}.
$
 Hence, provided that $\rho\geq\mathcal{O}(\epsilon^{-1})$, we get:
\[
\|F(x_{k^*+1})\|=\frac{\|\lambda_{k^*+1}\|}{\rho}\leq\mathcal{O}(\epsilon).
\]
\noindent It follows that $x_{k^*+1}$ is an $\epsilon$ first-order solution of problem \eqref{eq1} and this is achieved after  $k^*=\mathcal{O}(\frac{1}{\epsilon^{2.5}})$  iterations. 
Moreover, from Lemma \ref{lemma3}, we have:
\begin{align*}
     \frac{\beta_{k+1}}{2}\|&\Delta x_{k+1}\|^2\leq P_{k}-P_{k+1}\hspace{0.3cm}\forall k\geq0.
\end{align*}
By summing up the above inequality from $i=0$ to $i=k$, we obtain:
\begin{align}
\sum_{i=1}^{\infty}{\frac{\beta_{i+1}}{2}\|\Delta x_{i+1}\|^2} \leq \sum_{i=0}^{k}{\frac{\beta_{i+1}}{2}\|\Delta x_{i+1}\|^2}&\leq P_{0}-P_{k+1}{\overset{\eqref{important}}{\leq}}\alpha-\bar{L} < \infty.\label{limit}
\end{align}
This, together with  $\beta_{k}\geq\ubar{\beta}>0$, yields that $\lim_{k\to\infty}{\|\Delta x_{k}\|}=0$. Since the sequence $\{x_{k}\}_{k\geq0}$ is bounded, then  there exists a convergent subsequence, say  $\{x_{k}\}_{k\in\mathcal{K}}$, with the limit $x_{\rho}^*$.
Combining  \eqref{bounde_gradient1} from Lemma \ref{bounded_gradient}  with \eqref{bar_gamma}, we have:
\[
\|\nabla\mathcal{P}_{\rho}(x_{\rho}^*)\|=\lim_{k\in\mathcal{K}}{\|\nabla\mathcal{P}_{\rho}(x_{k})\|}\leq 2\bar{\beta}\lim_{k\in\mathcal{K}}\|\Delta x_{k+1}\| + \frac{\rho M_FL_F}{2} \lim_{k\in\mathcal{K}}\|\Delta x_{k+1}\|^2=0.
\]
Therefore, $ \nabla\mathcal{P}_{\rho}(x_{\rho}^*)=0$, i.e., 
\[
\nabla f(x_{\rho}^*) + {J_F(x_{\rho}^*)}^T \lambda_{\rho}^* = 0 \quad \text{ and } \quad \| F(x_{\rho}^*)\| = \frac{\|\lambda_{\rho}^*\|}{\rho} \leq \mathcal{O}(\epsilon).
\]
This concludes our proof. \qed
\end{proof}
\noindent From previous theorem, one can see that  in addition to its straightforward implementation, LQP algorithm also enjoys  global  convergence results, which gives it an advantage over approaches where only local convergence can be guaranteed, such as  SCP schemes \cite{MesBau:21}. One of its key advantages lies in its avoidance of calling complicated subroutines, as the subproblem is smooth strongly  convex quadratic and unconstrained, making it remarkably efficient compared to \cite{CarGou:11}, where the subproblem is not smooth, and compared to \cite{XieWri:21, LinMa:22}, where the subproblem is nonconvex.  In fact, solving the subproblem in our approach simplifies to solving a linear system of equalities.  With global theoretical convergence to an  $\epsilon$ first-order solution in at most $\mathcal{O}(\frac{1}{\epsilon^{2.5}})$ iterations, our method ensures the reliable finding of optimal solutions for a broad spectrum of nonconvex optimization problems, even those with nonlinear constraints, unlike \cite{KonMel:18} which  can only handle linear equality constraints.  Hence, its simplicity and effectiveness  make it an attractive algorithm for a wide range of practical applications.

\subsection{Improved rates under KL}
In this section, under the KL condition,  we provide better convergence rates for the iterates of Algorithm \ref{alg1}. In particular, we prove that the whole sequence  $\{x_{k}\}_{k\geq0}$ converges. Let's first derive several inequalities that will be useful for future proofs. From  \eqref{bounde_gradient}, we have the following:
\begin{equation}\label{key_formul}
\|\nabla\mathcal{P}_{\rho}(x_{k+1})\|^2\leq\Gamma_{\text{max}}^2\|\Delta x_{k+1}\|^2, \quad \text{where }   \; \Gamma_{\text{max}} := 2(\bar{\beta} + \rho M_F^2).
\end{equation}
Then, it follows from \eqref{key_formul} and Lemma \ref{lemma3}, that:
 \begin{equation}\label{rate}
        P_{k+1}-P_{k}\leq-\frac{\ubar{\beta}}{2\Gamma_{\text{max}}^2}\left\|\nabla\mathcal{P}_{\rho}(x_{k+1})\right\|^2.
 \end{equation}
\noindent We denote by $\text{crit } \mathcal{P}_{\rho}$ the set of critical points of the function $\mathcal{P}_{\rho}(\cdot)$ defined in \eqref{lyapunov_function}. Furthermore, we denote $\mathcal{E}_{k}=P_{k}-P^{*}$, where $P^{*}=\lim_{k\to\infty}{P_{k}}$ (recall that  the sequence $\{P_k\}_{k\geq0}$ is decreasing and bounded from bellow according to  Lemma \ref{lemma3} and \eqref{important}, respectively, hence it is convergent). Let us denote the set of limit points of $\{x_k\}_{k\geq0}$ by:
\[
\Omega:=\{x_{\rho}^*\;:\; \exists \text{ a convergent subsequence} \;  \{x_k\}_{k\in\mathcal{K}} \; \text{such that} \lim_{k\in\mathcal{K}}{x_k}=x_{\rho}^*\}.
\]
Let's now introduce the following lemma, which highlights the relationship between the set of limit points of the generated sequence, denoted $\Omega$, and the critical points of the Lyapunov function, denoted by  $\text{crit} \,  \mathcal{P}_{\rho}$.
\begin{lemma}\label{added_lemma}
Consider Algorithm \ref{alg1}  and let $\{P_{k}\}_{k\geq0}$ be defined as in \eqref{lyapunov_function}. If  the sequence, $\{x_{k}\}_{k\geq0}$, generated by Algorithm \ref{alg1} is in some compact set $\mathcal{S}$ on which Assumptions \ref{assump1}, \ref{assump2} and \ref{assump3} hold and, moreover,  $x_0$ is chosen as in \eqref{eq4} with $\rho\geq 1$, then the following statements hold:
\begin{enumerate}[(i)]
  \item $\Omega$  is a compact subset of \text{crit} $\mathcal{P}_{\rho}$ and   $ \lim_{k\to\infty}{\text{dist}(x_k,\Omega)}=0$.\label{lem_item1}
     \item For any $x\in\Omega,$ we have $\mathcal{P}_{\rho}(x)=P^{*}$.\label{lem_item2}
\end{enumerate}
\end{lemma}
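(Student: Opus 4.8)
The plan is to follow the standard template for the analysis of descent methods that satisfy a KL inequality (the Attouch--Bolte--Svaiter framework, \cite{AttBol:13}), since all the ingredients have already been assembled earlier: the boundedness of $\{x_k\}_{k\geq0}$ (it lies in the compact set $\mathcal{S}$), the vanishing of the steps $\lim_{k\to\infty}\|\Delta x_k\|=0$ established in \eqref{zero_limit}, the fact from Theorem \ref{unused_lemma} that every limit point is a critical point of $\mathcal{P}_\rho$, the convergence of $\{P_k\}_{k\geq0}$ to $P^*$, and the continuity of $\mathcal{P}_\rho$. I would prove the three assertions contained in (i) separately and then deduce (ii) from continuity alone.

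For (i), the inclusion $\Omega\subseteq\text{crit}\,\mathcal{P}_\rho$ is immediate from Theorem \ref{unused_lemma}: any $x_\rho^*\in\Omega$ is by definition the limit of a convergent subsequence, and that theorem guarantees $\nabla\mathcal{P}_\rho(x_\rho^*)=0$. Compactness of $\Omega$ follows from two observations: $\Omega\subseteq\mathcal{S}$ is bounded, and the set of cluster points of any sequence is closed (a limit of cluster points is again a cluster point), hence $\Omega$ is closed and bounded. The remaining claim $\lim_{k\to\infty}\text{dist}(x_k,\Omega)=0$ I would prove by contradiction: if it failed, there would exist $\delta>0$ and a subsequence $\{x_{k_j}\}_j$ with $\text{dist}(x_{k_j},\Omega)\geq\delta$; being bounded, this subsequence admits a further convergent subsubsequence whose limit $\bar x$ is, by construction, a cluster point of $\{x_k\}$ and therefore lies in $\Omega$, which forces $\text{dist}(x_{k_j},\Omega)\to0$ along that subsubsequence by continuity of the map $x\mapsto\text{dist}(x,\Omega)$, a contradiction.

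For (ii), I would fix $x\in\Omega$ and select a subsequence $\{x_k\}_{k\in\mathcal{K}}$ with $x_k\to x$. Since $\mathcal{P}_\rho$ is continuous (indeed continuously differentiable), passing to the limit along $\mathcal{K}$ yields $\mathcal{P}_\rho(x)=\lim_{k\in\mathcal{K}}\mathcal{P}_\rho(x_k)=\lim_{k\in\mathcal{K}}P_k$. But $\{P_k\}_{k\geq0}$ is convergent with limit $P^*$, so every subsequence also converges to $P^*$, whence $\mathcal{P}_\rho(x)=P^*$. As $x\in\Omega$ was arbitrary, $\mathcal{P}_\rho$ is constant and equal to $P^*$ on $\Omega$.

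The only genuinely non-routine step is the claim $\text{dist}(x_k,\Omega)\to0$, as it is the one place where pointwise convergence along a single subsequence is insufficient and a two-level subsequence extraction combined with the closedness of $\Omega$ is required; everything else reduces to continuity of $\mathcal{P}_\rho$ and the already-proven facts. I would also remark that $\lim\|\Delta x_k\|=0$ is not strictly needed for this particular statement (it becomes essential later, e.g.\ for the connectedness of $\Omega$ and for the KL-based rate estimates), so I would keep the argument resting only on boundedness and continuity.
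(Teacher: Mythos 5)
Your proof is correct and follows essentially the same route as the paper's own: the inclusion $\Omega\subseteq\text{crit}\,\mathcal{P}_{\rho}$ via the gradient bound and vanishing steps (you route it through Theorem \ref{unused_lemma}, while the paper inlines the same ingredients, Lemma \ref{bounded_gradient} and \eqref{zero_limit}), compactness from boundedness plus closedness of the cluster set, and part (ii) from continuity of $\mathcal{P}_{\rho}$ together with uniqueness of the limit $P^{*}$. The one place where you genuinely diverge is the claim $\text{dist}(x_k,\Omega)\to0$: the paper disposes of it with the chain $0\leq\lim_{k\to\infty}\text{dist}(x_k,\Omega)\leq\lim_{k\in\mathcal{K}}\text{dist}(x_k,\Omega)=\text{dist}(x_{\rho}^*,\Omega)=0$, which as written bounds a full limit by a subsequential one (and presumes the full limit exists); your contradiction argument with the two-level subsequence extraction is the rigorous version of exactly that step, so on this point your write-up is tighter than the paper's. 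One small caveat on your closing remark: saying $\lim_{k\to\infty}\|\Delta x_k\|=0$ is ``not strictly needed'' is accurate only in the sense that you never invoke it directly --- it is consumed inside Theorem \ref{unused_lemma}, and the criticality of limit points in part (i) does ultimately rest on it, so it cannot actually be dispensed with for this statement.
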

\begin{proof}
    See Appendix.
\end{proof}

\noindent  Let us now show that the sequence $\|\Delta x_{k}\|$ has  finite length, provided that a KL condition holds.
\begin{lemma}\label{finite_length}
If  the sequence, $\{x_{k}\}_{k\geq0}$ generated by Algorithm \ref{alg1} is in some compact set $\mathcal{S}$ on which Assumptions \ref{assump1}, \ref{assump2} and \ref{assump3} hold and assume that $ \mathcal{P}_{\rho}(\cdot)$ satisfies the KL property on $\Omega$, then $\{x_{k}\}_{k\geq0}$ satisfies the finite length property, i.e.,
\[
\sum_{k=1}^{\infty}{\|\Delta x_{k}\|}<\infty,
\]
and consequently converges to a critical point of $\mathcal{P}_{\rho}(\cdot)$.
\end{lemma}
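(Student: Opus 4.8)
The plan is to run the standard Kurdyka--\L ojasiewicz finite-length argument, feeding it the sufficient-decrease property from Lemma \ref{lemma3}, the gradient bound from Lemma \ref{bounded_gradient}, and the KL inequality of Definition \ref{def2}. First I would dispose of the trivial case: if $\mathcal{E}_k = P_k - P^* = 0$ for some finite $k$, then monotonicity of $\{P_k\}_{k\geq0}$ forces $P_j = P^*$ for all $j\geq k$, and the descent inequality $\frac{\beta_{j+1}}{2}\|\Delta x_{j+1}\|^2 \leq P_j - P_{j+1} = 0$ makes the sequence eventually constant, so finite length holds trivially. Hence I may assume $\mathcal{E}_k > 0$ for all $k$, and recall that $\mathcal{E}_k \to 0^{+}$ since $\{P_k\}_{k\geq0}$ decreases to $P^{*}$.

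Next I would activate the KL property on $\Omega$. By Lemma \ref{added_lemma}, $\Omega$ is a nonempty compact subset of $\text{crit}\,\mathcal{P}_{\rho}$, $\mathcal{P}_{\rho}\equiv P^{*}$ on $\Omega$, and $\text{dist}(x_k,\Omega)\to 0$. Consequently, for all $k$ larger than some index $K_0$, the iterate $x_k$ lies in the neighborhood $\{x:\text{dist}(x,\Omega)<\epsilon\}$ and satisfies $P^{*}<\mathcal{P}_{\rho}(x_k)<P^{*}+\tau$. Since $\mathcal{P}_{\rho}$ is smooth (Lemma \ref{lemma2}), $\text{dist}(0,\partial\mathcal{P}_{\rho}(x_k))=\|\nabla\mathcal{P}_{\rho}(x_k)\|$, so Definition \ref{def2} yields $\varphi'(\mathcal{E}_k)\,\|\nabla\mathcal{P}_{\rho}(x_k)\|\geq 1$ for $k\geq K_0$.

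Then comes the core estimate. For $k\geq K_0$ I would chain three facts: concavity of $\varphi$ gives $\varphi(\mathcal{E}_k)-\varphi(\mathcal{E}_{k+1})\geq \varphi'(\mathcal{E}_k)(P_k-P_{k+1})$; the descent property \eqref{decrease} with $\beta_{k+1}\geq\ubar{\beta}$ gives $P_k-P_{k+1}\geq \frac{\ubar{\beta}}{2}\|\Delta x_{k+1}\|^2$; and the KL inequality together with the gradient bound \eqref{key_formul} gives $\varphi'(\mathcal{E}_k)\geq 1/(\Gamma_{\text{max}}\|\Delta x_k\|)$. Combining these produces $\|\Delta x_{k+1}\|^2 \leq \frac{2\Gamma_{\text{max}}}{\ubar{\beta}}\,\|\Delta x_k\|\,\big(\varphi(\mathcal{E}_k)-\varphi(\mathcal{E}_{k+1})\big)$. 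Taking square roots and applying $\sqrt{uv}\leq\frac12(u+v)$ converts this into $\|\Delta x_{k+1}\|\leq \frac12\|\Delta x_k\| + \frac{\Gamma_{\text{max}}}{\ubar{\beta}}\big(\varphi(\mathcal{E}_k)-\varphi(\mathcal{E}_{k+1})\big)$. Summing from $K_0$ to $N$, the $\varphi$-differences telescope to at most $\varphi(\mathcal{E}_{K_0})$ (as $\varphi\geq 0$), and absorbing the $\frac12\sum\|\Delta x_k\|$ term into the left-hand side gives a bound on $\sum_{k=K_0}^{N}\|\Delta x_{k+1}\|$ uniform in $N$. Letting $N\to\infty$ and adding the finitely many initial terms yields $\sum_{k=1}^{\infty}\|\Delta x_k\|<\infty$. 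Finite length makes $\{x_k\}_{k\geq0}$ Cauchy, hence convergent, and since every limit point lies in $\text{crit}\,\mathcal{P}_{\rho}$ by Lemma \ref{added_lemma}, the limit is a single critical point of $\mathcal{P}_{\rho}$.

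The main obstacle is the uniform applicability of the KL inequality along the entire tail of the sequence: a priori the inequality holds pointwise at each element of $\Omega$ with constants that may vary with the point, and $\Omega$ need not be a singleton. This is precisely what the uniformized formulation in Definition \ref{def2} (a single $\varphi$, $\epsilon$, and $\tau$ valid on all of $\Omega$) supplies, so the delicate part is verifying its hypotheses --- compactness of $\Omega$, constancy of $\mathcal{P}_{\rho}$ on $\Omega$, and $\text{dist}(x_k,\Omega)\to 0$ --- all of which are granted by Lemma \ref{added_lemma}. The remaining subtlety is purely bookkeeping: the index mismatch between the gradient bound $\|\nabla\mathcal{P}_{\rho}(x_k)\|\leq\Gamma_{\text{max}}\|\Delta x_k\|$ and the decrease term $\|\Delta x_{k+1}\|^2$ is what forces the cross-index product $\|\Delta x_k\|\,\|\Delta x_{k+1}\|$ and hence the Young-inequality splitting above.
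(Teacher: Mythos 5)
Your proposal is correct and follows essentially the same route as the paper's proof: the same case split on $\mathcal{E}_k=0$ versus $\mathcal{E}_k>0$, the same activation of the uniformized KL inequality via Lemma \ref{added_lemma}, the same chain (concavity of $\varphi$ + descent from Lemma \ref{lemma3} + the gradient bound $\|\nabla\mathcal{P}_{\rho}(x_k)\|\leq\Gamma_{\text{max}}\|\Delta x_k\|$) leading to the cross-index product, and the same telescoping-plus-absorption summation followed by the Cauchy argument. The only cosmetic difference is that you fix the splitting parameter once and for all via $\sqrt{uv}\leq\frac{1}{2}(u+v)$, whereas the paper carries a free parameter $\theta$ and later chooses it with $\Gamma_{\text{max}}/\theta<1$ --- your choice corresponds to $\theta=2\Gamma_{\text{max}}$ and yields the identical bound.
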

\begin{proof}
See Appendix.
\end{proof}
 
\noindent  
Lemma \ref{finite_length} shows that under the KL property  the set of limit points of the sequence $\{x_{k}\}_{k\geq0}$ generated by Algorithm \ref{alg1} is a singleton, denoted by $x_{\rho}^*$. In the next theorem, we will prove how fast the iterates converge to this limit point (see also \cite{Yas:22} for a similar reasoning).

 \begin{theorem}\label{main_result2}[Convergence of the whole sequence $\{x_{k}\}_{k\geq0}$] Let  Assumptions \ref{assump1}, \ref{assump2} and \ref{assump3} hold and  {$\mathcal{P}_{\rho}(\cdot)$ satisfy the KL property on $\Omega = \{x_{\rho}^*\}$, where 
 $x_{\rho}^*$ is the limit point of the sequence $\{x_k\}_{k\geq0}$ generated by Algorithm \ref{alg1}.  Then,} there exists  $k_1\geq1$ such that for all $k\geq k_1$ we have:
       \begin{equation*}\label{rate_point}
           \|x_{k}-x_{\rho}^*\|\leq C\max\{\varphi(\mathcal{E}_{k}),\sqrt{\mathcal{E}_{k-1}}\},
       \end{equation*}
       where $C>0$  and  $\varphi\in\Psi_{\tau}$, with $\tau>0$, denotes a desingularizing function. 
 \end{theorem}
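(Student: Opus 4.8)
The plan is to follow the Attouch--Bolte--Svaiter template: turn the KL inequality at $x_\rho^*$ into a summable bound on the tail $\sum_{j\geq K}\|\Delta x_j\|$, and then exploit $\|x_K-x_\rho^*\|\leq\sum_{j\geq K+1}\|\Delta x_j\|$, which is legitimate precisely because Lemma~\ref{finite_length} already guarantees that the whole sequence converges to the single limit point $x_\rho^*$. The three ingredients I would combine are the sufficient decrease $\frac{\ubar{\beta}}{2}\|\Delta x_{k+1}\|^2\leq \mathcal{E}_{k}-\mathcal{E}_{k+1}$ coming from Lemma~\ref{lemma3}; the relative-error bound $\|\nabla\mathcal{P}_{\rho}(x_{k})\|\leq\Gamma_{\text{max}}\|\Delta x_{k}\|$ from Lemma~\ref{bounded_gradient}; and the KL inequality, noting that since $\mathcal{P}_{\rho}$ is smooth (Lemma~\ref{lemma2}) one has $\text{dist}(0,\partial\mathcal{P}_{\rho}(x_k))=\|\nabla\mathcal{P}_{\rho}(x_k)\|$.

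First I would fix $k_1$ large enough that, for all $k\geq k_1$, the iterate $x_k$ lies in the KL neighbourhood of $x_\rho^*$ and $0<\mathcal{E}_k<\tau$; this is possible because $x_k\to x_\rho^*$, and by Lemma~\ref{added_lemma} we have $\mathcal{P}_{\rho}(x_k)\downarrow P^*=\mathcal{P}_{\rho}(x_\rho^*)$, so $\mathcal{E}_k\downarrow 0$. (If $\mathcal{E}_k=0$ for some finite $k$, the descent forces $\Delta x_j=0$ thereafter and the claim is trivial, so I may assume $\mathcal{E}_k>0$.) On this range, concavity of $\varphi$ gives $\varphi(\mathcal{E}_k)-\varphi(\mathcal{E}_{k+1})\geq\varphi'(\mathcal{E}_k)\,(\mathcal{E}_k-\mathcal{E}_{k+1})$, while the KL inequality together with Lemma~\ref{bounded_gradient} yields $\varphi'(\mathcal{E}_k)\geq\|\nabla\mathcal{P}_{\rho}(x_k)\|^{-1}\geq(\Gamma_{\text{max}}\|\Delta x_k\|)^{-1}$. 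Feeding in the sufficient decrease produces
\[
\varphi(\mathcal{E}_k)-\varphi(\mathcal{E}_{k+1})\;\geq\;\frac{\ubar{\beta}}{2\Gamma_{\text{max}}}\,\frac{\|\Delta x_{k+1}\|^2}{\|\Delta x_k\|},
\]
and an application of $2\sqrt{uv}\leq u+v$ converts this into the one-step estimate
\[
2\|\Delta x_{k+1}\|\;\leq\;\|\Delta x_k\|+\frac{2\Gamma_{\text{max}}}{\ubar{\beta}}\big(\varphi(\mathcal{E}_k)-\varphi(\mathcal{E}_{k+1})\big).
\]

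Next I would sum this telescoping-type estimate from $K$ to $N$: the $\|\Delta x\|$ terms partially collapse and the $\varphi$ terms telescope, and letting $N\to\infty$ (using $\|\Delta x_N\|\to0$ and $\varphi(\mathcal{E}_N)\to\varphi(0)=0$) gives the tail bound
\[
\sum_{j=K+1}^{\infty}\|\Delta x_j\|\;\leq\;\|\Delta x_K\|+\frac{2\Gamma_{\text{max}}}{\ubar{\beta}}\,\varphi(\mathcal{E}_K).
\]
Finally, bounding the left side below by $\|x_K-x_\rho^*\|$ and the first right-hand term through the descent inequality $\|\Delta x_K\|\leq\sqrt{2/\ubar{\beta}}\,\sqrt{\mathcal{E}_{K-1}}$ (since $\frac{\ubar{\beta}}{2}\|\Delta x_K\|^2\leq\mathcal{E}_{K-1}-\mathcal{E}_K\leq\mathcal{E}_{K-1}$), I obtain
\[
\|x_K-x_\rho^*\|\;\leq\;\sqrt{\tfrac{2}{\ubar{\beta}}}\,\sqrt{\mathcal{E}_{K-1}}+\frac{2\Gamma_{\text{max}}}{\ubar{\beta}}\,\varphi(\mathcal{E}_K)\;\leq\;C\max\{\varphi(\mathcal{E}_K),\sqrt{\mathcal{E}_{K-1}}\},
\]
with $C=\sqrt{2/\ubar{\beta}}+2\Gamma_{\text{max}}/\ubar{\beta}$, which is exactly the claimed estimate for $k\geq k_1$.

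I expect the main obstacle to be the bookkeeping around the KL neighbourhood: one must justify that all iterates beyond $k_1$ remain inside the region where the KL inequality is valid and that $\mathcal{E}_k$ stays strictly positive (otherwise $\varphi'(\mathcal{E}_k)$ is undefined), handling the degenerate finite-termination case separately. Everything after that is the routine concavity and Young's-inequality manipulation followed by telescoping; the only other point requiring genuine care is the correct index alignment between $\|\Delta x_K\|$, $\mathcal{E}_{K-1}$ and $\varphi(\mathcal{E}_K)$, so that the final maximum appears with precisely the stated arguments $\varphi(\mathcal{E}_k)$ and $\sqrt{\mathcal{E}_{k-1}}$.
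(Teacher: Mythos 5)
Your proposal is correct and follows essentially the same route as the paper's proof: both combine the sufficient decrease from Lemma \ref{lemma3}, the bound $\|\nabla\mathcal{P}_{\rho}(x_k)\|\leq\Gamma_{\text{max}}\|\Delta x_k\|$ from Lemma \ref{bounded_gradient}, and concavity of $\varphi$ with the KL inequality to obtain a one-step estimate that is summed and telescoped, then closed via the triangle inequality and $\|\Delta x_k\|\leq\sqrt{2/\ubar{\beta}}\sqrt{\mathcal{E}_{k-1}}$. The only cosmetic difference is that you apply Young's inequality with fixed weights (giving the factor $2$ on $\|\Delta x_{k+1}\|$), whereas the paper keeps a free parameter $\theta>\Gamma_{\text{max}}$ and rearranges with $\delta_0=1-\Gamma_{\text{max}}/\theta$, so only the constant $C$ differs.
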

 
 \begin{proof}
     From Lemma \ref{lemma3}, we get:
\begin{align}\label{llyap_thm}
     P_{k+1}-P_{k}{\overset{{}}{\leq}}-\frac{\ubar{\beta}}{2}\|x_{k+1}-x_{k}\|^2.
\end{align}
 Based on our choice of $\rho$ and $\beta_k$, the sequence $\{P_k\}_{k\geq0}$ is monotonically decreasing, see Lemma \ref{lemma3}, and consequently $\{\mathcal{E}_{k}\}_{k\geq0}$ is  monotonically decreasing. 
 Using \eqref{llyap_thm} and the fact that  $\{\mathcal{E}_{k}\}_{k\geq0}$ is nonnegative, we have for all $k\geq0$: 
\begin{equation}\label{lmit2}
    \|\Delta x_{k+1}\|\leq \sqrt{\frac{2}{\ubar{\beta}}}\sqrt{\mathcal{E}_{k}}.
\end{equation}
Since $ P_{k}\to P^{*}$, ${x_k}\to x_{\rho}^*$ and $\mathcal{P}_{\rho}(\cdot)$ satisfies the KL property at $x_{\rho}^*$, then there exists   $k_1=k_1(\epsilon,\tau)\geq 1$   such that $\forall k>k_1$, we have $\|x_k-x_{\rho}^*\|\leq \epsilon$ and $P^{*}<P_{k}<P^{*}+\tau$, and  the following KL property holds:
 \begin{equation}\label{KL1}
     \varphi'(\mathcal{E}_{k})\|\nabla\mathcal{P}_{\rho}(x_{k})\|\geq1.
 \end{equation}
 Since $\varphi$ is concave function, we have $\varphi(\mathcal{E}_{k})-\varphi(\mathcal{E}_{k+1})\geq\varphi'(\mathcal{E}_{k})(\mathcal{E}_{k}-\mathcal{E}_{k+1})$. Therefore, from \eqref{llyap_thm} and \eqref{KL1}, we get:
 \begin{align*}
 \|x_{k+1}-x_{k}\|^2&\leq\varphi'(\mathcal{E}_{k})\|x_{k+1}-x_{k}\|^2\|\nabla\mathcal{P}_{\rho}(x_{k})|\nonumber\\
 &\leq\frac{2}{\ubar{\beta}}\varphi'(\mathcal{E}_{k})(\mathcal{E}_{k}-\mathcal{E}_{k+1})\|\nabla\mathcal{P}_{\rho}(x_{k})\|\nonumber\\
 &\leq\frac{2}{\ubar{\beta}}\Big(\varphi(\mathcal{E}_{k})-\varphi(\mathcal{E}_{k+1})\Big)\|\nabla\mathcal{P}_{\rho}(x_{k})\|.
 \end{align*}
 Note that for given $a,b,c\geq0$, if we have $ {a^2}\leq2 b\times c$, and recognizing that we always have $2 b\times c\leq(b+c)^2$, then  $ {a^2}\leq (b+c)^2$, which in turn implies $a\leq b+c$.  It follows that for any $\theta>0$, we have (by taking $a=\|\Delta x_{k+1}\|$, $b=\frac{\theta}{\ubar{\beta}}\Big(\varphi(\mathcal{E}k)-\varphi(\mathcal{E}{k+1})\Big)$, and $c=\frac{1}{\theta}\|\nabla\mathcal{P}_{\rho}(x_k)\|$):
\begin{align}\label{lmit1}
    {\|\Delta x_{k+1}\|}\leq&\frac{\theta}{\ubar{\beta}}\Big(\varphi(\mathcal{E}_{k})-\varphi(\mathcal{E}_{k+1})\Big)+\frac{1}{\theta}\|\nabla\mathcal{P}_{\rho}(x_{k})\|.
\end{align}
Furthermore, from  \eqref{bounde_gradient}, we have:
\begin{align*}
    \| \nabla\mathcal{P}_{\rho}(x_{k})\|{\overset{{}}{\leq}}\Gamma_{\text{max}}\|\Delta x_{k}\|.
\end{align*}
 Then, \eqref{lmit1} becomes:
\begin{align*}
    \|\Delta x_{k+1}\|\leq&\frac{\theta}{\ubar{\beta}}\Big(\varphi(\mathcal{E}_{k})-\varphi(\mathcal{E}_{k+1})\Big)+\frac{\Gamma_{\text{max}}}{\theta}\|\Delta x_{k}\|.
\end{align*}
It follows that:
\begin{align*}
   \left(1-\frac{\Gamma_{\text{max}}}{\theta}\right) \|\Delta x_{k+1}\|\leq&\frac{\theta}{\ubar{\beta}}\Big(\varphi(\mathcal{E}_{k})-\varphi(\mathcal{E}_{k+1})\Big)+\frac{\Gamma_{\text{max}}}{\theta}\Big(\|\Delta x_{k}\|-\|\Delta x_{k+1}\|\Big).
\end{align*} 
By summing up the above inequality over $k\geq k_1$, we get: 
\begin{align*}
\left(1-\frac{\Gamma_{\text{max}}}{\theta}\right)\sum_{k\geq k_1}{\|\Delta x_{k+1}\|}\leq&\frac{\theta}{\ubar{\beta}}\varphi(\mathcal{E}_{{k_1}})+\frac{\Gamma_{\text{max}}}{\theta}\|\Delta x_{k_1}\|.
\end{align*}
Let us now choose $\theta>0$ such that $0<\frac{\Gamma_{\text{max}}}{\theta}<1$ and define  $\delta_0$ as  $\delta_0=1-\frac{\Gamma_{\text{max}}}{\theta}>0$. Then,  we have: 
\begin{align*}
   \sum_{k\geq k_1}{\|\Delta x_{k+1}\|}\leq&\frac{\theta}{\ubar{\beta}\delta_0}\varphi(\mathcal{E}_{{k_1}})+\frac{\Gamma_{\text{max}}}{\theta\delta_0}\|\Delta x_{k_1}\|.
\end{align*}
Hence, using the triangle inequality, we get for any $k \geq k_1$:
 \begin{align*}
  \|x_{k}-x_{\rho}^*\| \leq \sum_{l\geq k}{\|x_{l}-x_{l+1}\|}
 \leq\frac{\theta}{\ubar{\beta}\delta_0}\varphi(\mathcal{E}_{{k}})  +\frac{\Gamma_{\text{max}}}{\theta\delta_0}\|\Delta x_{k}\|.
 \end{align*}
 Further, using \eqref{lmit2}, it follows that:
  \begin{align*}
 \|x_{k}-x_{\rho}^*\|
 \leq\frac{\theta}{\ubar{\beta}\delta_0}\varphi(\mathcal{E}_{{k}})+\frac{\Gamma_{\text{max}}}{\theta\delta_0}\sqrt{\frac{2}{\ubar{\beta}}}\sqrt{\mathcal{E}_{k-1}}\leq C \max\{\varphi(\mathcal{E}_{{k}}),\sqrt{\mathcal{E}_{k-1}}\},
 \end{align*}
 where 
$
     C= \frac{\theta}{\ubar{\beta}\delta_0}+\frac{\Gamma_{\text{max}}}{\theta\delta_0}\sqrt{\frac{2}{\ubar{\beta}}}.
$
 This concludes our proof.\qed
 \end{proof}
{\color{black}
 \noindent The next theorem derives  {local} convergence rates  {for} the sequence generated by Algorithm \ref{alg1} when the Lyapunov function satisfies the KL property for a specific desingularizing function.
 \begin{theorem}\label{corollary} [Convergence rates of $\{x_{k}\}_{k\geq0}$] Let the assumptions of Theorem \ref{main_result2} hold and   
 $x_{\rho}^*$ be the limit point of the sequence $\{x_{k}\}_{k\geq0}$ generated by Algorithm \ref{alg1}. If $\mathcal{P}_{\rho}(\cdot)$ satisfies KL property at $x_{\rho}^*$ with  desingularizing function
       \[
       \varphi:[0,\tau)\to[0,+\infty),\; \varphi(s)=s^{1-\nu}, \text{ where } \nu\in[0,1),
       \]
       then the following rates hold:
       \begin{enumerate}
         \item If $\nu=0$, then $x_{k}$ converges to $x_{\rho}^*$ in a finite number of iterations.
     \item If $\nu\in(0,\frac{1}{2}]$, then there exists $k_1$ such that for all $k\geq k_1 $, we have the following linear convergence:
         \[
         \|x_{k}-x_{\rho}^*\|\leq C\frac{\sqrt{\mathcal{E}_{k_1}}}{\sqrt{(1+\bar{c}\mathcal{E}_{k_1}^{2\nu-1}})^{k-k_1}}, \quad \text{where} \;\; \bar{c}=\frac{\ubar{\beta}}{(1-\nu)^2\Gamma_{\text{max}}^2}.
         \]
         
         \item If $\nu\in(\frac{1}{2},1)$, then there exists $k_1$ such that for all $k> k_1 $, we have the following sublinear convergence:
         \[
         \|x_{k}-x_{\rho}^*\|\leq C\left(\frac{1}{\mu(k-k_1)+\mathcal{E}_{k_1}^{1-2\nu}}\right)^{\frac{1-\nu}{2\nu-1}}.
         \]
       \end{enumerate}
 \end{theorem}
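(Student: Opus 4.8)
The plan is to reduce the whole statement to the decay of the scalar Lyapunov gap $\mathcal{E}_k=P_k-P^*$ and then transfer that decay to the iterate error through Theorem~\ref{main_result2}, which already bounds $\|x_k-x_\rho^*\|$ by $C\max\{\varphi(\mathcal{E}_k),\sqrt{\mathcal{E}_{k-1}}\}$. First I would combine the sufficient decrease \eqref{rate}, namely $\mathcal{E}_k-\mathcal{E}_{k+1}\geq\frac{\ubar{\beta}}{2\Gamma_{\text{max}}^2}\|\nabla\mathcal{P}_{\rho}(x_{k+1})\|^2$, with the KL inequality \eqref{KL1} evaluated at $x_{k+1}$. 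Since $\varphi(s)=s^{1-\nu}$ gives $\varphi'(s)=(1-\nu)s^{-\nu}$, the KL bound reads $\|\nabla\mathcal{P}_{\rho}(x_{k+1})\|\geq\frac{1}{1-\nu}\mathcal{E}_{k+1}^{\nu}$, and substituting yields the master recursion
\[
\mathcal{E}_k-\mathcal{E}_{k+1}\geq\frac{\bar{c}}{2}\,\mathcal{E}_{k+1}^{2\nu},\qquad \bar{c}=\frac{\ubar{\beta}}{(1-\nu)^2\Gamma_{\text{max}}^2},
\]
valid for all $k\geq k_1$, where $k_1$ is the index from Theorem~\ref{main_result2} beyond which the KL inequality is active. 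All three rates will emerge from how this recursion behaves as $2\nu$ crosses the values $0$ and $1$.

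For $\nu=0$ the KL inequality forces $\|\nabla\mathcal{P}_{\rho}(x_k)\|\geq 1$ whenever $\mathcal{E}_k>0$; since $x_k\to x_\rho^*$ and $\nabla\mathcal{P}_{\rho}(x_\rho^*)=0$, this is impossible for large $k$, so $\mathcal{E}_k=0$ eventually, and then $P_k=P^*$ constant together with Lemma~\ref{lemma3} (i.e.\ \eqref{decrease}) gives $\Delta x_k=0$, that is, finite termination. For $\nu\in(0,\tfrac12]$ we have $2\nu-1\leq 0$; factoring the recursion as $\mathcal{E}_k\geq\mathcal{E}_{k+1}\big(1+\tfrac{\bar{c}}{2}\mathcal{E}_{k+1}^{2\nu-1}\big)$ and using monotonicity ($\mathcal{E}_{k+1}^{2\nu-1}\geq\mathcal{E}_{k_1}^{2\nu-1}$, because the exponent is nonpositive and $\mathcal{E}_{k+1}\leq\mathcal{E}_{k_1}$) gives the geometric bound $\mathcal{E}_k\leq\mathcal{E}_{k_1}\big(1+\tfrac{\bar{c}}{2}\mathcal{E}_{k_1}^{2\nu-1}\big)^{-(k-k_1)}$. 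Feeding this into Theorem~\ref{main_result2} and noting that for $\nu\leq\tfrac12$ one has $1-\nu\geq\tfrac12$, so $\varphi(\mathcal{E}_k)=\mathcal{E}_k^{1-\nu}\leq\sqrt{\mathcal{E}_k}\leq\sqrt{\mathcal{E}_{k-1}}$ and the term $\sqrt{\mathcal{E}_{k-1}}$ dominates the maximum, yields the advertised linear rate after absorbing the one-index shift and the absolute constant factor into $C$.

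The main work, and the main obstacle, is the sublinear case $\nu\in(\tfrac12,1)$, where $2\nu\in(1,2)$ and the naive factoring no longer produces a base strictly larger than $1$. Here I would run the classical Attouch--Bolte dichotomy on $u_k:=\mathcal{E}_k^{-(2\nu-1)}$. Convexity of $s\mapsto s^{-(2\nu-1)}$ gives $u_{k+1}-u_k\geq(2\nu-1)\mathcal{E}_k^{-2\nu}(\mathcal{E}_k-\mathcal{E}_{k+1})\geq(2\nu-1)\tfrac{\bar{c}}{2}\big(\mathcal{E}_{k+1}/\mathcal{E}_k\big)^{2\nu}$. If $\big(\mathcal{E}_{k+1}/\mathcal{E}_k\big)^{2\nu}\geq\tfrac12$ this is bounded below by the fixed constant $\tfrac{(2\nu-1)\bar{c}}{4}$; otherwise $\mathcal{E}_{k+1}<2^{-1/(2\nu)}\mathcal{E}_k$, whence $u_{k+1}>2^{(2\nu-1)/(2\nu)}u_k$, so $u_{k+1}-u_k\geq\big(2^{(2\nu-1)/(2\nu)}-1\big)u_{k_1}$, again a fixed positive constant by monotonicity of $u_k$. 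Taking $\mu$ to be the minimum of the two constants gives $u_k\geq u_{k_1}+\mu(k-k_1)$, i.e.\ $\mathcal{E}_k\leq\big(\mu(k-k_1)+\mathcal{E}_{k_1}^{1-2\nu}\big)^{-1/(2\nu-1)}$. Finally, since for $\nu>\tfrac12$ the term $\varphi(\mathcal{E}_k)=\mathcal{E}_k^{1-\nu}$ now dominates $\sqrt{\mathcal{E}_{k-1}}$ in Theorem~\ref{main_result2} (its exponent $(1-\nu)/(2\nu-1)$ is smaller than $\tfrac12/(2\nu-1)$, so it decays more slowly), raising the polynomial bound to the power $1-\nu$ produces exactly the exponent $\frac{1-\nu}{2\nu-1}$. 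The only delicate points beyond this dichotomy are checking which of $\varphi(\mathcal{E}_k)$ and $\sqrt{\mathcal{E}_{k-1}}$ dominates in each regime and bookkeeping the index shifts and constant factors so the final bounds match the stated $\bar{c}$, $\mu$, and $C$.
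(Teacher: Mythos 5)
Your proposal is correct and follows the same skeleton as the paper's proof: the master recursion $\bar{c}\,\mathcal{E}_{k+1}^{2\nu}\leq 2(\mathcal{E}_{k}-\mathcal{E}_{k+1})$ obtained by combining the sufficient decrease \eqref{rate} with the KL inequality \eqref{KL1}, a three-regime analysis of this scalar recursion, and the transfer to $\|x_k-x_\rho^*\|$ through the bound of Theorem \ref{main_result2}. The cases $\nu=0$ and $\nu\in(0,\tfrac12]$ match the paper essentially verbatim (your $\nu=0$ argument via $\|\nabla\mathcal{P}_\rho(x_k)\|\geq 1$ contradicting $\|\nabla\mathcal{P}_\rho(x_k)\|\leq\Gamma_{\text{max}}\|\Delta x_k\|\to 0$ is an equivalent variant of the paper's contradiction $\bar c\leq\mathcal{E}_{k-1}-\mathcal{E}_k\to 0$). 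The one genuine divergence is in the case $\nu\in(\tfrac12,1)$: the paper fixes a ratio $r_0$ and splits \emph{globally} into two cases on whether $h(\mathcal{E}_k)\leq r_0h(\mathcal{E}_{k-1})$ holds for all $k>k_1$, using an integral comparison with $h(s)=s^{-2\nu}$, whereas you run the standard Attouch--Bolte dichotomy \emph{per iteration} on $u_k=\mathcal{E}_k^{-(2\nu-1)}$ via convexity of $s\mapsto s^{-(2\nu-1)}$. Your version is in fact tighter logically: the paper's Case 2, as stated, only guarantees the key inequality \eqref{need2} for \emph{some} indices $k$, yet it is then summed over all $k>k_1$, a wrinkle your per-step dichotomy avoids entirely since at every $k$ one of the two lower bounds on $u_{k+1}-u_k$ holds, giving $u_k\geq u_{k_1}+\mu(k-k_1)$ cleanly. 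Minor shared caveats, present in the paper as well: the dominance comparisons $\mathcal{E}_k^{1-\nu}\leq\sqrt{\mathcal{E}_{k-1}}$ (resp.\ the reverse for $\nu>\tfrac12$) implicitly use $\mathcal{E}_{k-1}\leq 1$, which holds for $k$ large since $\mathcal{E}_k\to 0$; and your constants match the theorem statement's $\bar c=\ubar{\beta}/\big((1-\nu)^2\Gamma_{\text{max}}^2\big)$ up to the paper's own internal factor-of-two discrepancy between its proof and its statement, so nothing is lost there.
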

 }
 \begin{proof}
 Let $ \nu\in[0,1)$ and for all  $ s\in [0,\tau), \varphi(s)=s^{1-\nu}$ and  $\varphi'(s)=(1-\nu)s^{-\nu}$.  From Theorem \ref{main_result2}, it follows that there exists $k_1\geq 1$ such that $\forall k\geq k_1$, we have:
        \begin{equation}\label{rate_point1}
           \|x_{k}-x_{\rho}^*\|\leq C\max\{\mathcal{E}_{k}^{1-\nu},\sqrt{\mathcal{E}_{k-1}}\}.
       \end{equation}
Furthermore,  \eqref{KL1} yields:
       \[ {\mathcal{E}_k}^{\nu}\leq (1-\nu)\|\nabla\mathcal{P}_{\rho}(x_{k})\| \hspace{0.5cm} \forall k\geq k_1.\]
Moreover, from \eqref{rate}, we have for any $k\geq 1$:
 \[
        \|\nabla\mathcal{P}_{\rho}(x_{k})\|^2\leq\frac{2\Gamma_{\text{max}}^2}{\ubar{\beta}}(\mathcal{E}_{k-1}-\mathcal{E}_{k}).
 \]
 Hence, 
$    {\mathcal{E}_k}^{2\nu}\leq \frac{2(1-\nu)^2\Gamma_{\text{max}}^2}{\ubar{\beta}}(\mathcal{E}_{k-1}-\mathcal{E}_{k}) \hspace{0.5cm} \forall k> k_1.$
Setting $\bar{c}=\frac{\ubar{\beta}}{2(1-\nu)^2\Gamma_{\text{max}}^2}>0,$
 we get:  
 \begin{equation} \label{c-bar}
     \bar{c}{\mathcal{E}_k}^{2\nu}\leq\mathcal{E}_{k-1}-\mathcal{E}_{k} \hspace{0.5cm} \forall k> k_1.
 \end{equation} 
 \begin{enumerate}
         \item Let $\nu=0$. If $\mathcal{E}_k>0$ for any $k> k_1$, we have $\bar{c}\leq \mathcal{E}_{k-1}-\mathcal{E}_{k}$. As $k$ goes to infinity, the right hand side approaches zero. Then, $0<\bar{c}\leq0$ which is a contradiction. Hence,  there exists $ k> k_1 $ such that $ \mathcal{E}_k=0.$ Then, $\mathcal{E}_k\to 0$ in a finite number of steps and  from \eqref{rate_point1}, $x_k\to x_{\rho}^*$ in a finite number of steps.
         \item Let $\nu\in(0,\frac{1}{2}]$. Then, $2\nu-1 \leq 0.$
         Let $k > k_1$. Since $\{\mathcal{E}_i\}_{i\geq k_1}$ is monotonically decreasing, then $\mathcal{E}_i\leq\mathcal{E}_{k_1}$ for any $i\in\{k_1+1, k_1+2,..., k\}$ and 
         \[\bar{c}{\mathcal{E}_{k_1}}^{2\nu-1}\mathcal{E}_k\leq\mathcal{E}_{k-1}-\mathcal{E}_{k} \hspace{0.5cm} \forall k> k_1.\]
         Rearranging this, we get  for all $k> k_1$:
       \[ 
       \mathcal{E}_k\leq \frac{\mathcal{E}_{k-1}}{1+\bar{c}{\mathcal{E}_{k_1}}^{2\nu-1}}\leq\frac{\mathcal{E}_{k-2}}{(1+\bar{c}{\mathcal{E}_{k_1}}^{2\nu-1})^2}\leq...\leq\frac{\mathcal{E}_{k_1}}{(1+\bar{c}{\mathcal{E}_{k_1}}^{2\nu-1})^{k-k_1}}.
       \]
  Since $\{\mathcal{E}_k\}_{k\geq k_1}$ is monotonically decreasing, then $\mathcal{E}_{k}\leq\mathcal{E}_{k-1}$, and since $\nu\in(0,\frac{1}{2}]$, then $1-\nu\geq\frac{1}{2}$. Hence,
\[
\mathcal{E}_{k}^{1-\nu}\leq \mathcal{E}_{k}^{\frac{1}{2}}\leq \mathcal{E}_{k-1}^{\frac{1}{2}}.
\]
Then, it follows that:
\[
 \max\{\mathcal{E}_k^{1-\nu}, \sqrt{\mathcal{E}_{k-1}} \}= \sqrt{\mathcal{E}_{k -1}}.
\]  
       Hence, we get:
        \[
         \|x_{k}-x_{\rho}^*\|\leq C\frac{\sqrt{\mathcal{E}_{k_1}}}{\sqrt{(1+\bar{c}\mathcal{E}_{k_1}^{2\nu-1}})^{k-k_1}},
         \]
  \item Let $\nu\in(\frac{1}{2},1)$. From \eqref{c-bar}, we have: 
       \begin{equation}\label{eqqq}
           \bar{c}\leq(\mathcal{E}_{k-1}-\mathcal{E}_k){\mathcal{E}^{-2\nu}_k} \hspace{0.5cm} \forall k> k_1.
       \end{equation}
    Let $h:\mathbb{R}^{*}_{+}\to\mathbb{R}$ be defined as $h(s)=s^{-2\nu}$ for any $s\in\mathbb{R}^{*}_{+}$. It is clear that $h$ is monotonically decreasing and $\forall s\in\mathbb{R}_+, h'(s)=-2\nu s^{-(1+2\nu)}<0$. Since $\mathcal{E}_k\leq\mathcal{E}_{k-1}$ for all $k> k_1$, then $h(\mathcal{E}_{k-1})\leq h(\mathcal{E}_k)$ for all $k> k_1$. 
We consider two scenarios: in the first case, for any $k > k_1$, there exists an $r_0 \in (1, +\infty)$ such that: $h(\mathcal{E}_k) \leq r_0 h(\mathcal{E}_{k-1})$; in the second case, for some $k > k_1$, regardless  of the choice of $r_0 \in (1, +\infty)$, we have $h(\mathcal{E}_k) > r_0 h(\mathcal{E}_{k-1})$.

    \medskip 

\textbf{{Case 1}}: There exists $r_0\in(1,+\infty)$ such that 
$ h(\mathcal{E}_k)\leq r_0h(\mathcal{E}_{k-1}), \; \forall k> k_1.$
Then, from  \eqref{eqqq} we get:
\begin{align*}
 &   \bar{c}\leq (\mathcal{E}_{k-1}-\mathcal{E}_k)h(\mathcal{E}_{k})\leq (\mathcal{E}_{k-1}-\mathcal{E}_k)r_0h(\mathcal{E}_{k-1})=r_0h(\mathcal{E}_{k-1})\int_{\mathcal{E}_k}^{\mathcal{E}_{k-1}}{1\,ds}\\
    &{\overset{h(\cdot) \text{ decreases}}{\leq}} r_0\int_{\mathcal{E}_k}^{\mathcal{E}_{k-1}}{h(s)\,ds}= r_0\int_{\mathcal{E}_k}^{\mathcal{E}_{k-1}}{s^{-2\nu}\,ds}=\frac{r_0}{1-2\nu}({\mathcal{E}^{1-2\nu}_{k-1}}-{\mathcal{E}^{1-2\nu}_{k}}).
\end{align*}
Since $\nu>\frac{1}{2}$, then $2\nu-1>0$ and multiplying both sides of the above inequality by $\frac{2\nu-1}{r_0}$, we get that:
\[
0<\frac{\bar{c}(2\nu-1)}{r_0}\leq {\mathcal{E}^{1-2\nu}_{k}}-{\mathcal{E}^{1-2\nu}_{k-1}}.
\]

Let us define $\hat{c}=\frac{\bar{c}(2\nu-1)}{r_0}$ and $\hat{\nu}=1-2\nu<0$. We then get:
\begin{equation}\label{need1}
    0<\hat{c}\leq {\mathcal{E}^{\hat{\nu}}_{k}}-{\mathcal{E}^{\hat{\nu}} _{k-1}} \hspace{0.5cm} \forall k> k_1.
\end{equation}
    \newline
\textbf{{Case 2}}: 
For any $r_0 \in (1, +\infty)$, there exists some $k > k_1$ such that $h(\mathcal{E}_k) > r_0h(\mathcal{E}_{k-1})$, which can be expressed as ${\mathcal{E}_k^{-2\nu}} \geq r_0{\mathcal{E}_{k-1}^{-2\nu}}$. This implies:
$r_0{\mathcal{E}_{k-1}^{2\nu}} \geq {\mathcal{E}_k^{2\nu}}.$
By setting $q=\frac{1}{r_0^{\frac{1}{2\nu}}}\in(0,1)$, we obtain,
\[
q\mathcal{E}_{k-1}\geq\mathcal{E}_k.
\]
 Since $\hat{\nu}=1-2\nu<0$ we have $ q^{\hat{\nu}}{\mathcal{E}^{\hat{\nu}}_{k-1}}\leq{\mathcal{E}^{\hat{\nu}}_k}$ and then, it follows that: 
\[(q^{\hat{\nu}}-1){\mathcal{E}^{\hat{\nu}}_{k-1}}\leq{\mathcal{E}^{\hat{\nu}}_{k}}-{\mathcal{E}^{\hat{\nu}}_{k-1}}.
\] 
Since $q\in(0,1)$ and $\hat{\nu}<0$, we have  $q^{\hat{\nu}}-1>0$ and since, moreover, $\mathcal{E}_k\to0^+$ as $k\to\infty$ which is equivalent to $\mathcal{E}^{\hat{\nu}}_k\to+\infty$ as $k\to\infty$ , then there exists $\Tilde{c}$ such that $(q^{\hat{\nu}}-1){\mathcal{E}^{\hat{\nu}}_{k-1}}\geq\tilde{c}$ for all $k> k_1$ (it is sufficient to take $\tilde{c}=(q^{\hat{\nu}}-1){\mathcal{E}^{\hat{\nu}}_{k_1}}$). Therefore, it follows that:
\begin{equation}\label{need2}
        0<\tilde{c}\leq(q^{\hat{\nu}}-1){\mathcal{E}^{\hat{\nu}}_{k-1}}\leq {\mathcal{E}^{\hat{\nu}}_{k}}-{\mathcal{E}^{\hat{\nu}}_{k-1}}.
\end{equation}
 Hence, regardless of the  cases involved (case 1 or case 2), one can always combine \eqref{need1} and \eqref{need2} by choosing ${\mu}=\min\{\hat{c},\tilde{c}\}>0$,  to obtain
\[
        0<{\mu}\leq {\mathcal{E}^{\hat{\nu}}_{k}}-{\mathcal{E}^{\hat{\nu}}_{k-1}} \hspace{0.5cm} \forall k> k_1.
\]
Summing the above inequality from $k_1+1$ to some $k> k_1+1$ gives
\[
\mu\sum_{i=k_1+1}^{k}{1}\leq\sum_{i=k_1+1}^{k}{\left({\mathcal{E}^{\hat{\nu}}_{i}}-{\mathcal{E}^{\hat{\nu}}_{i-1}}\right)}.
\]
By simplifying the summation, we get:
\[
\mu(k-k_1) + {\mathcal{E}^{\hat{\nu}}_{k_1}}\leq  {\mathcal{E}^{\hat{\nu}}_{k}}.
\]

Moreover, since $\hat{\nu}=1-2\nu<0$, it follows that 
\[\mathcal{E}_k\leq (\mu(k-k_1) + {\mathcal{E}^{\hat{\nu}}_{k_1}})^{\frac{1}{\hat{\nu}}}=(\mu(k-k_1) + {\mathcal{E}^{1-2{\nu}}_{k_1}})^{\frac{1}{1-2{\nu}}}.\]
Furthermore,  since $\{\mathcal{E}_k\}_{k\geq k_1}$ is monotonically decreasing, then $\mathcal{E}_{k}\leq\mathcal{E}_{k-1}$, and since $\nu\in(\frac{1}{2},1)$, then $0<1-\nu<\frac{1}{2}$. Hence
\[
 \mathcal{E}_{k}^{1-\nu}\leq \mathcal{E}_{k-1}^{1-\nu}\leq \quad \text{and}\quad  \mathcal{E}_{k-1}^{\frac{1}{2}}\leq \mathcal{E}_{k-1}^{1-\nu}.
\]
Then, it follows that:
\[
 \max\{\mathcal{E}_k^{1-\nu}, \sqrt{\mathcal{E}_{k-1}} \}\leq \max\{\mathcal{E}_{k-1}^{1-\nu}, \sqrt{\mathcal{E}_{k-1}} \}= {\mathcal{E}^{1-\nu}_{k -1}}.
\]
Thus, \eqref{rate_point1} becomes: 
   \[
         \|x_{k}-x_{\rho}^*\|\leq C\left(\frac{1}{\mu(k-k_1-1)+\mathcal{E}_{k_1}^{1-2\nu}}\right)^{\frac{1-\nu}{2\nu-1}}, \hspace{0.5cm} \forall k>k_1.
         \]
         
   \end{enumerate} 
This concludes our proof.\qed
\end{proof}
 {Note that in Theorems \ref{main_result2} and \ref{corollary}, $x_{\rho}^*$ represents a critical point of the Lyapunov function $\mathcal{P}_{\rho}(\cdot)$, which is obtained for any choice of $\rho\geq 1$. However, as indicated in Theorem \ref{main_result}, it is essential to choose the penalty parameter $\rho$  large enough for $x_{\rho}^*$ to qualify as an  $\epsilon$ first order solution to the problem $\eqref{eq1}$. 
Consequently, by following the prescribed choice of $\rho$ in Theorem \ref{main_result} and considering the KL condition, LQP algorithm guarantees convergence to an $\epsilon$ first order solution at the (sub)linear rates determined by Theorem \ref{corollary}.}
 

\begin{remark}
Note that in our convergence  analysis,  in order to find some approximate feasibility bound (see Theorem 1), we used  the following  inequality: 
\begin{equation}\label{licq_SIGMA}
\sigma \|F(x)\| \leq \|J_F^T(x)F(x)\| \quad \forall x \in \mathcal{S},
\end{equation}
for some $ \sigma > 0 $, which follows from the LICQ assumption (see Remark 2). Such condition can be generalized so that our convergence analysis can also cover problems with  simple constraints (i.e., $x \in \mathcal{X} \subseteq \mathbb{R}^n$, where $ \mathcal{X} $ is a nonempty closed convex set that admits an easy projection): 
\begin{align*}
\min_{x \in \mathcal{X}} f(x) \;\;\;  \textrm{s.t.} \;\;  F(x) = 0.
\end{align*}
In this case, the previous regularity condition \eqref{licq_SIGMA} can be replaced in our previous proofs with the existence of some $ \sigma > 0 $ such that (see also \cite{Lu:22}):
\[
\sigma \|F(x)\| \leq \text{dist}\left(-J_F^T(x)F(x), N_{\mathcal{X}}(x)\right) \quad \forall x \in \mathcal{S},
\]
where $ N_{\mathcal{X}}(x) $ denotes the normal cone of  $ \mathcal{X} $ and $\mathcal{S}\subseteq\mathcal{X}$. Note that when $ \mathcal{X} = \mathbb{R}^n $, we have $ N_{\mathcal{X}}(x) = \{0\} $ and the above condition simplifies to  \eqref{licq_SIGMA}. 
\end{remark}

 \subsection{Selection of the penalty parameter $\rho$}
 The above results, showcasing the total number of iterations required to find an $\epsilon$ first-order solution to the problem, rely on the assumption that the penalty parameter $\rho$ exceeds a certain threshold. However, determining this threshold beforehand poses challenges as it depends on unknown parameters of the functions within the problem and the algorithm's parameters.
To overcome this challenge, we propose a scheme that allows us to determine a sufficiently large value of $\rho$ without needing specific parameter information. Inspired by Algorithm 3 in \cite{XieWri:21}, our approach involves repeatedly calling Algorithm \ref{alg1} as an inner loop. If Algorithm \ref{alg1} fails to converge within a predetermined number of iterations, we gradually increase the penalty parameter $\rho$ by a constant multiple in the outer loop. The detailed implementation of this scheme is outlined in Algorithm \ref{alg2}.
Remarkably, in certain applications, such as Model Predictive Control, we need to solve a series of problems with a desired level of accuracy $\epsilon$. Interestingly, the optimal penalty parameter for the initial optimization problem often works well for subsequent problems as well. This implies that investing more computational resources into the first optimization problem can lead to considerable savings for the remaining optimization problems. Thus, this adaptive approach ensures the effectiveness and efficiency of our algorithm, even in scenarios where precise parameter information is unavailable. This adaptability  makes our method a practical and robust solution for a wide range of optimization problems, with potential benefits in various real-world applications.\\
\begin{algorithm}
\caption{LQP method with trial value of $\rho$}\label{alg2}
\begin{algorithmic}[1]
\State  $\textbf{Initialization: } \tau>1, \epsilon>0 \text{ and } \rho_0>0;$
\State $l \gets 0$
\While{$\text{ Infeasible }$}
     \State $\rho_{l+1}\gets \tau \rho_l$
    \State Call Algorithm \ref{alg1} using warm start until  $\|\nabla \mathcal{P}_{\rho_{l+1}}(x^l_{k^*+1})\|\leq\epsilon$ 
    \State $l \gets l+1$
\EndWhile
\end{algorithmic}
\end{algorithm}

\noindent  Algorithm \ref{alg2} is well-defined and terminates in a finite number of iterations, provided that the  parameter $\tau >1$. In fact, during the $l$-th iteration of Algorithm \ref{alg2}, we have $\rho_{l+1} = \tau^{l+1} \rho_0$    and $\lambda^l_{k^*+1} = \rho F(x^l_{k^*+1})$ is bounded by some positive constant $\frac{M}{\sigma}$ (see proof of Theorem \ref{main_result}),
where the superscript \(l\) in \(x^l_{k^*+1}\) and \(\lambda^l_{k^*+1}\) corresponds to the outer iterations of Algorithm \ref{alg2}, while the subscript \(k^*+1\) indicates the inner iterations of Algorithm \ref{alg1} at which the approximate stationarity of the penalty function \(\mathcal{P}_{\rho_{l+1}}(\cdot)\) is achieved.
Consequently,    we have:
\[ 
\|F(x^l_{k^*+1})\| = \frac{\|\lambda^l_{k^*+1}\|}{\tau^{l+1}\rho_0} \leq \frac{M}{\rho_0\sigma\tau^{l+1}}.
\]
 Hence, $\|F(x^l_{k^*+1})\| \leq \epsilon$ provided that  $\frac{M}{\rho_0\sigma\tau^{l+1}} \leq \epsilon$, which is equivalent to $\tau^{l+1} \geq \frac{M}{\rho_0\sigma\epsilon}$. Thus, we require:
\[ 
(l+1)\log\tau \geq \log\frac{M}{\rho_0\sigma} + \log\frac{1}{\epsilon}. 
\]
This leads to the following condition, considering that  $\tau = \frac{1}{\epsilon}$:
\[ l \geq \frac{\log\frac{M}{\rho_0\sigma}}{\log\frac{1}{\epsilon}}. \]
Therefore, for  $\tau = \frac{1}{\epsilon}$, one has to increase $\rho$ at most $\frac{\log\frac{M}{\rho_0\sigma}}{\log\frac{1}{\epsilon}}$ times to reach a value that guarantees the desired precision.


\section{Numerical results}\label{sec5}
In this section we numerically compare Algorithm \ref{alg1} (LQP) with SCP algorithm \cite{MesBau:21}, IPOPT \cite{WacBie:06}, Inexact Restoration algorithm \cite{BueMar:20} and {Algencan \cite{BirMar:14} (an augmented Lagrangian based method)}, on nonconvex optimization problems with nonlinear equality constraints. The simulations are implemented in Python and executed on a PC with (CPU 2.90GHz, 16GB RAM). Since one cannot guarantee that the SCP iterates converge to a first-order (KKT) point, we choose the following stopping criteria: we stop the algorithms when the difference between two consecutive values of the objective function is less than a tolerance $\epsilon_1=10^{-3}$ and the norm of constraints is less than a tolerance $\epsilon_2=10^{-5}$.  For the implementation of our method, we choose $\beta_k$ constant and equal to $10$,  and we select the penalty parameter to be constant for a given problem, specifically $\rho$ is chosen from the interval $[10^7,  10^9]$  (for each test problem, the corresponding value of $\rho$ used in LQP is given in Table 1). A problem is considered to be solved by a method if the stopping criteria are achieved in less than 30 minutes. In the cases when such conditions are not met for an algorithm, we wrote "-". The numerical results are illustrated in Table \ref{tab1} and Figure \ref{fig1}. 

\begin{table}
\small
{
    
      \centering
\begin{adjustbox}{width=\columnwidth,center}

    \begin{tabular}{|c|cc|cc|cc|cc|cc|}
    \hline
   \multirow{3}{*}{\backslashbox{(n,m)}{Alg}} 
     & \multicolumn{2}{c|}{LQP} &
     \multicolumn{2}{c|}{SCP} &
      \multicolumn{2}{c|}{IPOPT}&
      \multicolumn{2}{c|}{Inexact Restoration\cite{BueMar:20}}&
       \multicolumn{2}{c|}{Algencan}\\ \cline{2-11}
              & \# iter     & cpu  
          & \# iter    & cpu &
           \# iter     & cpu &
          \# iter     & cpu  &
          \# iter     & cpu     \\ 
         & $f^*$    & $\|F\|$  
          & $f^*$    & $\|F\|$ &
           $f^*$    & $\|F\|$  &
           $f^*$    & $\|F\|$ &
          $f^*$    & $\|F\|$\\
    \hline

    OPTCTRL3 &  5 &  {0.11}
    &5 &  0.16 & 
      7 & 7.40 & 
     - & -& 
    6 & \textbf{0.01}\\
    (119,80)/$\rho=2e7$ & 2047.99 &  1.43e-06
      &2048.01 & 4.52e-10 & 
      2048.01& 1.84e-08 & 
      -& -& 
    2048 & 3.15 e-10\\\hline
      
   OPTCTRL3 &  7 &  \textbf{0.35} 
    &7 &  1.55 & 
      10 & 11.99& 
     - & -& 
    13 & 3.47\\
    \shortstack{(1199,800)/$\rho=5e7$}& 18459.99 & 4.89e-06
      &18460.22 & 1.84e-09 & 
      18460.22& 6.33e-09 & 
      - & -& 
   18460 & 7.49 e-09\\\hline

    OPTCTRL3&  24 &  \textbf{10.63}
    &24 &  105.08 &  
      11 & 26.95 & 
      -& -& 
    11 & 102.68\\
   (4499,3000)/$\rho=5e7$ & 74464.82 & 1.43e-06
      &74465.03 & 6.87e-09 &
      74465.03& 1.09e-08 & 
      -& -& 
    74470 & 8.66 e-09\\\hline
      
   DTOC4&  3 &  {0.11} 
    &4 &  0.14 & 
      3 & 7.61 & 
      4& 0.22& 
    12 & \textbf{0.08}\\
    (299,198)/$\rho=1e7$ & 2.94 & 2.33e-07
      &2.94 & 3.48e-09 & 
      2.94& 1.02e-08 & 
      2.94& 1.29e-06& 
    2.94 & 8.03 e-09\\\hline
      
    DTOC4 &  3 &  \textbf{0.29} 
    &3 &  0.87 & 
      3 & 12.45 & 
      6& 1.39& 
    19 & 1.53\\
    (1497,998)/$\rho=2e7$ & 2.87 & 1.16e-06
      &2.88 & 1.16e-06 &  
      2.88 & 1.20e-08 & 
      2.88& 6.28e-06& 
    2.88 & 5.12 e-08\\\hline

        DTOC4 &  4 &  \textbf{1.07} 
    &4 &  5.81 &  
      3 & 23.51 & 
      3& 2.33& 
    13 & 4.88\\
    (2997,1998)/$\rho=2e7$& 2.87 & 2.832-07
      &2.87 & 2.83e-07 &  
      2.87& 9.33e-09 & 
      2.87& 2.64e-07& 
    2.87 & 6.59 e-09\\\hline
      
   DTOC4 &  4 &  \textbf{2.87} 
    &3 &  16.74 &  
      3 & 29.02 & 
      4& 6.68& 
    13 & 12.35\\
    (4497,2998)/$\rho=1e8$& 2.87 & 3.02e-07
      &2.87 & 4.87e-10 &  
      2.87& 3.66e-08 & 
      2.87& 3.00e-06& 
    2.87 & 3.56e-08\\\hline

    DTOC4 &  4 &  \textbf{32.30} 
    &4 &  566.80 &  
      3 & 146.73 & 
      -& -& 
    18 & 149.66\\
    (14997,9998)/$\rho=2e8$& 2.87 & 1.05e-07
      &2.87 & 1.05e-07 &  
      2.86& 4.49e-09 & 
      -& -& 
    2.86 & 7.27e-09\\\hline

    DTOC5 &  5 &  {0.12}
    &5 &  0.21 &  
      4 & 10.63 & 
      13& 0.36& 
    9 & \textbf{0.03}\\
       \shortstack{(198,99)/$\rho=1e7$} & 1.53 & 6.88e-08
      &1.53 & 6.64e-08 &  
      1.53& 4.99e-11 & 
      1.53& 7.38e-06& 
    1.53 & 4.93 e-09\\\hline
      
    DTOC5 &  7 &  \textbf{0.29}
    &7 &  0.98 &  
      3 & 12.06 & 
      5& 0.73& 
    19 & 0.52\\
       \shortstack{(998,499)/$\rho=1e7$} & 1.53 & 3.42e-06
      &1.53 & 3.45e-06 &  
      1.53& 7.76e-07 & 
      1.53& 1.47e-06& 
    1.53 & 9.72 e-08\\\hline
      
   DTOC5 &  10 &  \textbf{1.33}
        &10 &  4.52 &  
      3 & 18.74 & 
      6& 1.95& 
    12 & 1.72\\
      \shortstack{ (1998,999)/$\rho=5e7$} & 1.53 & 1.56e-06
      &1.53 & 1.56e-06 &  
      1.53& 6.88e-08 & 
      1.53& 4.02e-06& 
    1.53 & 3.11 e-08\\\hline
      
    DTOC5&  24 &  \textbf{47.03 }
    &24 &  799.07 &  
      3 & 75.25 & 
      16& 137.53& 
    18 & 48.27\\
    (9998,4999)/$\rho=3e8$& 1.54 & 1.97e-07
      &1.54 & 1.96e-07 &  
      1.53& 2.49e-07 & 
      1.53& 3.84e-07& 
    1.53 & 3.31 e-07\\\hline

    DTOC6 &  30 &  {0.30}
    &30 &  0.72 &  
      8 & 8.22 & 
      81& 2.10& 
    14 & \textbf{0.08}\\
       \shortstack{(200,100)/$\rho=1e^8$} & 729.00 & 2.20e-06
      &729.00 & 1.01e-09 &  
      727.98& 3.52e-10 & 
      728.29& 1.14e-06& 
    728 & 2.75 e-09\\\hline
      
   DTOC6 &  32 &  \textbf{1.17}
        &32 &  4.56 &  
      9 & 13.10 & 
      106& 14.96& 
    20 & 1.20\\
      \shortstack{ (1000,500)/$\rho=3e8$} & 6848.01 & 2.97-06
      &6848.06 & 2.7e-10 &  
      6846.61& 7.56e-10 & 
      6847.45& 1.92e-06& 
    6847 & 7.84e-09\\\hline
      
    DTOC6&  33 &  \textbf{3.88 }
    &33 &  15.25 &  
      9 & 18.85 & 
      118& 33.22& 
    17 & 5.08\\
    (2000,1000)/$\rho=3e8$& 17177.66 & 8.95e-06
      &17178.07 & 1.46e-09 &  
      17176.45& 5.24e-09 & 
      17177.56& 1.25e-06& 
    17180 & 2.02 e-09\\\hline

    ORTHREGA &  54 & {0.52}
    & 18 &   {0.47} &  
      66 & 7.37 & 
      -& -& 
    17 & \textbf{0.06}\\
    (133,64)/$\rho=2e7$ &  {350.30} & 2.42e-06
      &414.53 & 1.87e-06 &  
      350.30 & 3.02e-10 & 
      -& -& 
    350.30 & 3.79 e-09\\\hline
      
    ORTHREGA&  55 &  {1.22}
    & 39 &  1.73 &  
      76 & 10.14 & 
      -& -& 
    19 & \textbf{0.28}\\
    (517,256)/$\rho=2e7$& 1414.05 & 1.08e-06
      &1664.80 & 1.24e-06 &  
      1414.05& 6.19e-10 & 
      -& -& 
    1414 & 3.34 e-09\\\hline
      
    ORTHREGA &  94 &  {14.24}
    &67 &  31.78 &  
      14 & 23.99 & 
      -& -& 
    32 & \textbf{10.01}\\
    (2053,1024)/$\rho=5e7$ & 5661.43 & 3.19e-06
      &6654.78 & 2.07e-06 &  
      5661.43& 9.25e-07 & 
      -& -& 
    5661 & 2.17 e-08\\
      

       \hline
   %
       MSS1 & 17  &  \textbf{0.22}
    & 12& 0.15  &  
      53 & 13.52 & 
      42& 0.58& 
    15 & 0.53\\
       \shortstack{(90, 73)/$\rho=1e8$} & -16.00 & 6.04e-06
      &-8.71e-08 & 1.76e-06 &  
      -16.00& 4.17e-08 & 
      -15.48& 3.93e-06& 
    -15.00 & 3.29 e-08\\\hline
      
     
       MSS2 & 19  &  \textbf{2.64}
    & 21& 8.05  &  
      7 & 14.65 & 
      15& 7.75& 
    - & -\\
       \shortstack{(756, 703)/$\rho=1e8$} & -121.03 & 6.20e-06
      & -2.53e-10& 6.12e-06 &  
      -26.97& 5.96e-08 & 
      -0.37& 3.55e-06& 
    - & -\\\hline
      
   
       MSS3 & 68  &  \textbf{78.73}
    & 22& 135.15  &  
      -&- & 
      23& 155.51& 
    - & -\\
       \shortstack{(2070, 1981)/$\rho=1e9$} & -339.69 & 8.17e-06
      &-5.29e-09  & 7.76e-06  &  
      -& - & 
      -1e-03& 7.98e-06& 
    - & -\\\hline
      
     %
       OPTCTRL6 & -  &  -
    & 24&   \textbf{13.46}  &  
      13 & 27.42 & 
      -& -& 
    11 & 101.34\\
       \shortstack{(4499, 3000)} & - & -
      &74465.03 & 3.27e-09 &  
      74465.03& 2.32e-09 & 
      -& -& 
    74470 & 8.47 e-09\\\hline

       OPTCDEG2 & 4  &  \textbf{1.31}
    & 3& 2.71  &  
      4 & 7.92 & 
      3& 4.87& 
    19 & 107.27\\
       \shortstack{(4499, 3000)/$\rho=3e7$} & 59.15 & 3.07e-07
      &59.08 & 5.85e-08 &  
      227.72& 6.55e-08 & 
      59.08& 9.39e-10& 
    227.7 & 2.58 e-07\\\hline

    
       ORTHREGC & 101  &  85.75
    & 29& \textbf{20.93}  &  
      16 & 31.14 & 
      6& 19.62& 
    26 & 17.82\\
       \shortstack{(5005, 2500)/$\rho=5e7$} & 1681.78 & 8.65e-06
      &94.81 & 8.42e-06 &  
      94.81& 7.52e-07 & 
      133.86& 8.68e-08& 
    94.81 & 3.07e-07\\\hline

     
       EIGENB2 & 11  &  18.76
    &5 &  5.28 &  
      27 & \textbf{56.61} & 
      9& 53.24& 
    6 & 303.18\\
       \shortstack{(2550, 1275)/$\rho=7e8$} & 110.50 & 3.72e-13
      &110.50 & 1.61e-14 &  
      0.00& 5.45e-09 & 
      110.49& 1.12e-08& 
    0.00 & 8.33e-08\\\hline

       EIGENC2 & 6  &  3.12
    & 6 & 4.68  &  
      13 & \textbf{24.93} & 
      10& 83.99& 
    6 & 32.02\\
       \shortstack{(2652, 1326)/$\rho=7e8$} & 11162.34  & 3.25e-14
      & 11162.75& 4.64e-16 &  
      0.00& 8.43e-10 & 
     11150.50& 2.12e-10& 
    0.00 & 3.51e-10\\\hline
      
     
       EIGENACO & 19  &  {9.98}
    & 8& 1.75  &  
      - & - & 
      14& 10.52& 
    2 & \textbf{1.87}\\
       \shortstack{(2550, 1275)/$\rho=7e8$} & 10731.01 & 1.00e-15
      &22425.04 & 2.37e-18 &  
      -& - & 
      10731.25& 1.54e-09& 
    0.00 & 3.21e-09\\\hline

       EIGENBCO & 6  &  11.80
    & 5 & 1.23  &  
      9 & \textbf{19.58} & 
      11&50.84 & 
    - & -\\
       \shortstack{(2550, 1275)/$\rho=7e8$} & 49.50 & 4.19e-14
      &49.50 & 5.79e-16 &  
      0.00& 3.18e-17 & 
      59.50& 8.11e-07& 
    - & -\\\hline
      
     
       EIGENCCO & 7  &  4.06
    & 7& 1.55  &  
      13 & \textbf{42.88} & 
      7& 125.49& 
    6 & 1203.96\\
       \shortstack{(2652, 1326)/$\rho=3e8$} & 11100.50 & 7.56e-11
      &11100.51 & 1.99e-10 &  
      0.00& 2.75e-12 & 
      11100.50& 2.01e-10& 
    0.00 & 1.86e-10\\\hline

       DTOC1NA & 50  &  50.67
    & 4& 3.86  &  
      5 & {12.08} & 
      14& 40.81& 
    5 & \textbf{0.23}\\
       \shortstack{(5994, 3996)/$\rho=7e8$} & 47.64 & 5.95e-11
      &47.66 & 5.03e-13 &  
      4.15& 7.44e-11 & 
      47.64& 2.96e-09& 
    4.14 & 8.03e-10\\\hline

     
       DTOC1NB & 46  &  46.65
    & 4& 3.89  &  
      5 &{11.71} & 
      13& 38.32& 
    5 &  \textbf{0.36}\\
       \shortstack{(5994, 3996)/$\rho=1e9$} & 48.45 & 6.94e-11
      &48.47 & 1.68e-14 &  
      7.13& 6.31e-12 & 
      48.44& 2.46e-08& 
    7.14 & 1.19e-10\\
      

       \hline

       DTOC1NC & 27  &  28.36
    & 6 & 5.76  &  
      3 & {8.12} & 
      6& 17.78& 
    7 & \textbf{0.48}\\
       \shortstack{(5994, 3996)/$\rho=7e8$} & 58.63 & 1.43e-09
      & 58.64 & 3.20e-11 &  
      35.21& 5.80e-10 & 
      58.64& 1.67e-07& 
    35.20 & 6.42e-09\\\hline

     
       DTOC1ND & 22  &  24.04
    & 7& 6.70  &  
      3 & {12.79} & 
      6 & 17.75& 
    4 & \textbf{0.26}\\
       \shortstack{(5994, 3996)/$\rho=5e8$} & 66.65 & 1.81e-11
      &66.66 & 3.38e-11 &  
      47.63& 7.85e-09 & 
      66.67& 6.36e-07& 
    47.60 & 1.76e-10\\
      

       \hline

       SPINOP & 11  &  \textbf{4.25}
    & -& -  &  
      - & - & 
      7& 6.23& 
    - & -\\
       \shortstack{(1327, 1325)/$\rho=3e7$} & 2998.78 & 4.10e-07
      &- & - &  
      -& - & 
      3000.33& 1.06e-10& 
    - & -\\\hline

       DIXCHLNV & 114  &  54.32
    & -& -   &  
      28 & \textbf{47.31} & 
      8& 3.29& 
    - & -\\
       \shortstack{(1000, 500)/$\rho=3e7$} & 4.87 & 8.94e-07
      &- & - &  
      0.00& 3.58e-08 & 
      12.53& 8.28e-08& 
    - & -\\\hline

       GLIDER & 69  &  136.18
    & -& -  &  
      53 & \textbf{107.53} & 
      -& -& 
    - & -\\
       \shortstack{(5207, 4808)/$\rho=2e8$} & -543.46 & 8.27e-07
      &- & - &  
      -1247.97& 9.01e-08 & 
      -& -& 
    - & -\\\hline

       DTOC2 & 11  &  \textbf{12.16}
    & 21& 19.73  &  
      11 & 22.64 & 
      15& 44.98& 
    21 & 105.04\\
       \shortstack{(5994, 3996)/$\rho=2e8$} & 0.52 & 3.39e-06
      &0.91 & 9.66e-06 &  
      0.50& 6.21e-09 & 
      0.52& 9.51e-06& 
    0.51 & 6.28e-09\\\hline

       ROBOTARM & 12  &  9.18
    & -& -  &  
      7 & \textbf{10.92} & 
      -& -& 
     23 & 377.26\\
       \shortstack{(4400, 3202)/$\rho=1e7$} & 18.34 & 2.88e-07
      &- & - &  
      9.14& 2.05e-08 & 
      -& -& 
    9.14 & 1.20e-08\\\hline

       ROCKET & 251  &  142.92
    & -& -  &  
      5 & 7.49 & 
      11& \textbf{6.90}& 
    - & -\\
       \shortstack{(2403, 2002)/$\rho=5e7$} & -1.00 & 2.24e-06
      &- & - &  
      -1.00& 2.73e-07 & 
      -0.99& 3.23e-06& 
    - & -\\\hline

       CATMIX & 12  &  {2.65}
    & 9 & 2.84  &  
      3 & 4.36 & 
      4& \textbf{2.30}& 
    22 & 26.94\\
       \shortstack{(2401, 1600)/$\rho=1e8$} & -0.03 & 7.32e-08
      &-0.03 & 5.71e-09 &  
      -0.04& 4.88e-09 & 
      -0.34&3.19e-07 & 
    -0.04 & 2.55e-09 \\\hline
      
    \end{tabular}%
    \end{adjustbox}
}
\caption{Comparison between LQP, SCP, IPOPT, Inexact Restoration and Algencan on selected problems from the CUTEst collection.}
\label{tab1}
\end{table}

\medskip 

\noindent In Table \ref{tab1}, we compare the number of iterations, CPU time (sec), objective value, and feasibility violation (the Euclidean norm of functional constraints) for LQP, SCP, IPOPT,  Inexact Restoration and Algencan on real problems selected from the CUTEst collection \cite{GouOrb:15}. We have chosen all problems from CUTEst with nonlinear equality constraints, excluding obvious cases (small number of constraints and problems with only linear equality constraints). Note that for many test cases from CUTEst, LQP algorithm is capable of yielding solutions much faster than the other methods {(the best cpu time obtained by a method is written in bold in the table)}. However, for few problems, the optimal values yielded  by LQP,  SCP and the Inexact Restoration methods are not the best, as those given by IPOPT and Algencan. We attribute this phenomenon to the problem structure, as observed in the test group "EIGEN" and certain instances in "DTOC" test group. Another possible reason, specifically for LQP, could be the optimal objective value being much smaller than the constraint penalty, as we suspect is the case for the problem instance "ORTHREGC". Moreover, from Table \ref{tab1}, it can be seen that our method fails only in one case, 'OPTCTRL6', where it cannot provide some information about the solution. In contrast, the other methods fail in more than one case, thus proving that LQP is robust compared to SCP, IPOPT, the Inexact Restoration methods and Algencan.

\begin{figure}[htp]
   \hspace{-1.1cm} \includegraphics[width=14cm]{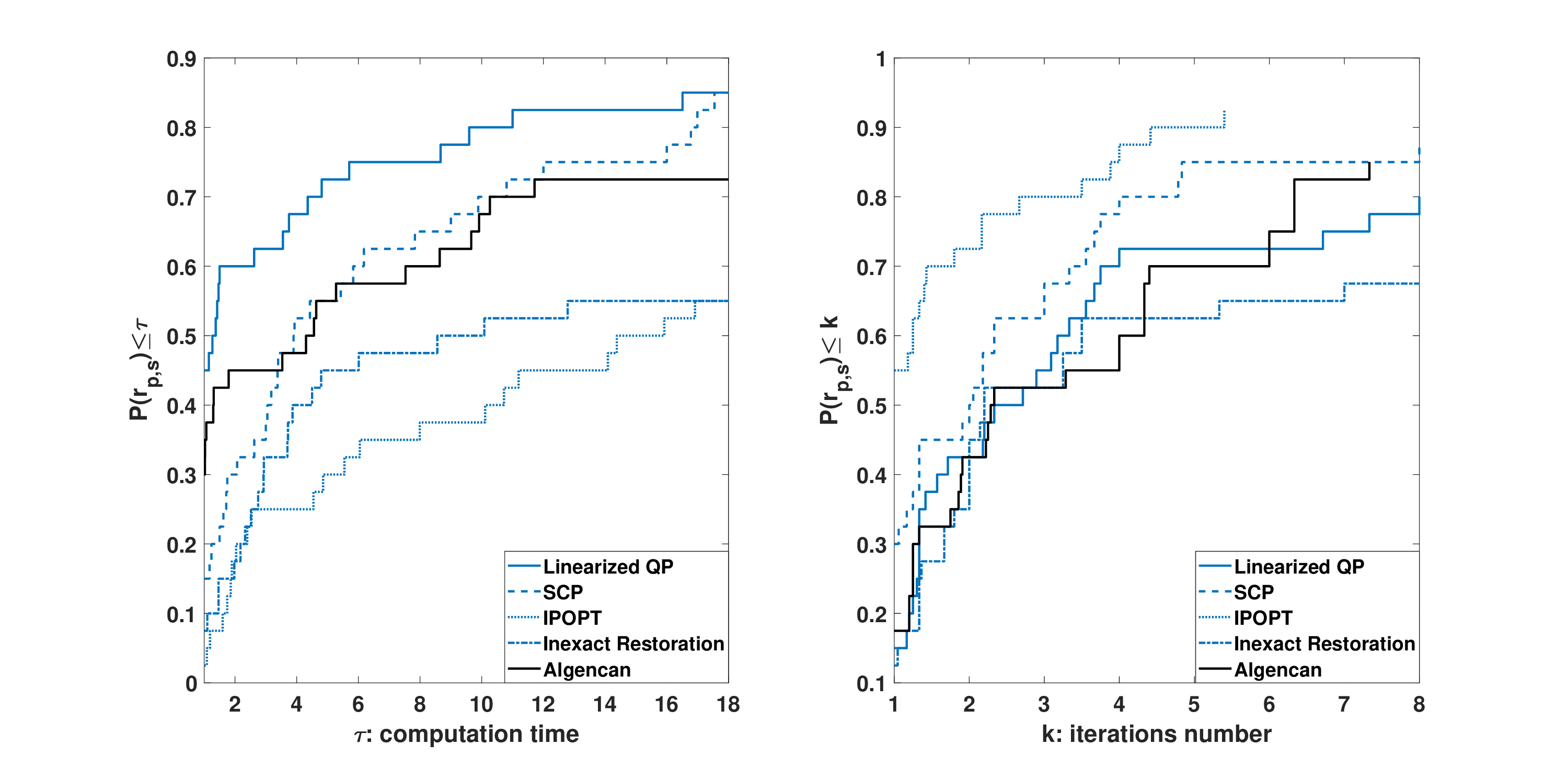}
    \caption{Performance profiles for  computation time (left) and number of iterations (right).}
    \label{fig1}
\end{figure}

\medskip 

\noindent Figure \ref{fig1} shows the performance profiling for computation time (left) and number of iterations (right) for the 5 algorithms. In the performance profiling, the vertical axis $P(r_{p,s}\leq\tau)$ ($P(r_{p,s}\leq k)$) represent the proportion of problems in the numerical experiments for which $r_{p,s}$ does not exceed $\tau$ ($k$), respectively, where $r_{p, s}$ is the ratio of the computational time (number of iterations) that the $s$ solver takes to solve problem $p$ to the shortest computational time (number of iterations) among the five algorithms to solve problem $p$, respectively. It is clear from the computational time profile,   Figure \ref{fig1} (left), that the proposed algorithm, LQP, approaches the limit line $1$ faster than SCP,  IPOPT, Inexact Restoration and Algencan algorithms. However, for the number of iterations this is not always the case. \\

\medskip 

\noindent Based on these preliminary experiments, one can see that LQP algorithm is an efficient and viable  method. Its ease of implementation, theoretical guarantees for convergence and practical performance  make it suitable for addressing large and hard  optimization problems with nonlinear equality constraints. }


\section{Conclusions}\label{sec6}
In this paper, we have proposed a linearized quadratic penalty  method for solving (locally) smooth  optimization problems with nonlinear equality constraints. In this method  we have linearized the cost function and functional  constraints  within the quadratic penalty  function and added a regularization term. By dynamically generating the regularization (proximal) parameter,  we have derived  global convergence rate to an $\epsilon$ first-order optimal solution and improved local convergence rates under the KL condition. Moreover, we have numerically shown that the proposed algorithm is efficient, comparing it with four known algorithms,  SCP,  IPOPT, Inexact Restoration and Algencan. 

\medskip 

\noindent  Furthermore, a potential avenue for extending this work involves the incorporation of full second-order derivatives of both the objective function and constraints. We believe that the analysis of such a scheme is viable, following a similar rationale as presented in this paper, but the subproblem may become nonconvex in this case. 
Finally, it would be interesting to extend this method to the augmented Lagrangian framework. This extension would preserve the simplicity of the subproblems and  would allow for controlling feasibility through dual variables rather than the penalty parameter, potentially leading to smaller values for this parameter.

\vspace{-0.4cm}


\section*{Conflict of interest}

\vspace{-0.3cm}

The authors declare that they have no conflict of interest.

\vspace{-0.5cm}

\section*{Data availability}

\vspace{-0.3cm}

It is not applicable.


\vspace{-0.4cm}

\appendix
\section*{Appendix}

\vspace{-0.3cm}

\noindent {\textbf{Proof of Lemma \ref{lemma3}}.
 Note that the subproblem's objective function $x \mapsto \bar{\mathcal{P}}_{\rho}(x\cdot;x_k)+\frac{\beta_{k+1}}{2}\|x-x_k\|^2$ is strongly convex with strong convexity constant $\beta_{k+1}$. Combining this with  the optimality of $x_{k+1}$  and the fact that $\bar{\mathcal{P}}_{\rho}(x_{k};x_k)=\mathcal{P}_{\rho}(x_{k})$, we get: 
\begin{equation} \label{initial_decrease}
 \bar{\mathcal{P}}_{\rho}(x_{k+1};x_k)\leq \mathcal{P}_{\rho}(x_{k}) -\beta_{k+1}\|x_{k+1}-x_k\|^2.
\end{equation}

\vspace{-0.1cm}

\noindent Further, since $f$ has Lipschitz gradient, we have:

\vspace{-0.3cm}

\begin{equation}\label{smooth_object}
f(x_{k+1}) - l_f(x_{k+1};x_k) \leq \frac{L_f}{2}\|x_{k+1}-x_k\|^2. 
\end{equation}

\vspace{-0.1cm}

\noindent Moreover, using properties of the norm, we obtain: 
\begin{align}
   &\frac{\rho}{2}\|F(x_{k+1})\|^2 - \frac{\rho}{2}\|l_F(x_{k+1};x_k)\|^2 \nonumber\\
   & =   \langle \rho l_F(x_{k+1};x_k), F(x_{k+1})-l_F(x_{k+1};x_k) \rangle + \frac{\rho}{2}\|F(x_{k+1})-l_F(x_{k+1};x_k)\|^2 \nonumber\\
   & \leq    \rho \|l_F(x_{k+1};x_k)\| \|F(x_{k+1})-l_F(x_{k+1};x_k) \| + \frac{\rho}{2}\|F(x_{k+1})-l_F(x_{k+1};x_k)\|^2 \nonumber\\
    &\overset{\text{Ass. } \ref{assump2}}{\leq} \rho \|l_F(x_{k+1};x_k)\| \frac{L_F}{2}\|\Delta x_{k+1}\|^2 + \frac{\rho}{2} \left(\frac{L_F}{2}\|\Delta x_{k+1}\|^2\right)^2.  \label{decrease_feasib}
\end{align}

\vspace{-0.2cm}

\noindent  Using the relation between arithmetic and geometric means we  bound $ \rho \|l_F(x_{k+1};x_k)\|$ as:

\vspace{-0.3cm}

\begin{align}
&\rho \|l_F(x_{k+1};x_k)\| - \frac{ \beta_{k+1} - L_f}{2L_F} \leq  \frac{L_F\rho^2}{ 2(\beta_{k+1} - L_f)}    \| l_F(x_{k+1};x_k)\|^2  \nonumber\\
&= \frac{L_F\rho}{ \beta_{k+1} - L_f} \left( \bar{\mathcal{P}_{\rho}} (x_{k+1};x_k) - f(x_{k+1}) + f(x_{k+1}) - l_f(x_{k+1};x_k)  \right)  \nonumber\\
& \overset{\eqref{initial_decrease}, \eqref{smooth_object}}{\leq }\frac{L_F\rho}{\beta_{k+1} - L_f} \left( \mathcal{P}_{\rho}(x_k) - f(x_{k+1}) - \frac{2\beta_{k+1} - L_f}{2}\|\Delta x_{k+1}\|^2 \right)  \nonumber\\
& \leq \frac{L_F\rho}{\beta_{k+1} - L_f} \left( \mathcal{P}_{\rho}(x_k) - f(x_{k+1}) - \left(\beta_{k+1} - L_f\right)\|\Delta x_{k+1}\|^2 \right) \nonumber\\
&\overset{(f(x_{k+1})\geq \bar{f})}{\leq} \frac{L_F\rho}{\beta_{k+1} - L_f} \left( \mathcal{P}_{\rho}(x_k) - \bar{f} \right) - L_F\rho\|\Delta x_{k+1}\|^2  \nonumber \\
& \overset{\eqref{eq_assu}}{\leq} \frac{\beta_{k+1}-L_f}{2L_F} - L_F\rho\|\Delta x_{k+1}\|^2. \label{bound_grad_norm_squared}
\end{align}

\noindent Using \eqref{bound_grad_norm_squared} in \eqref{decrease_feasib}, we get:

\vspace{-0.3cm}

\begin{align}
  \frac{\rho}{2}\|F(x_{k+1})\|^2 - \frac{\rho}{2}\|l_F(x_{k+1};x_k)\|^2  & \leq   \frac{\beta_{k+1}-L_f}{2} \|\Delta x_{k+1}\|^2 - \left(\frac{\rho L_F^2}{2} - \frac{\rho L_F^2}{8} \right) \|\Delta x_{k+1}\|^4 \nonumber\\
  & \leq  \frac{\beta_{k+1}-L_f}{2} \|\Delta x_{k+1}\|^2. \label{to_use_next}
\end{align}
Moreover, we have:

\vspace{-0.3cm}

\begin{align*} 
   \mathcal{P}_{\rho}(x_{k+1}) - \bar{\mathcal{P}}_{\rho}(x_{k+1};x_k) &=f(x_{k+1}) - l_f(x_{k+1};x_k) + \frac{\rho}{2}\|F(x_{k+1})\|^2 - \frac{\rho}{2}\|l_F(x_{k+1};x_k)\|^2.
\end{align*}
Using  \eqref{smooth_object} and \eqref{to_use_next} in the previous relation, it follows that:

\vspace{-0.3cm}

\[
 \mathcal{P}_{\rho}(x_{k+1}) \leq \bar{\mathcal{P}}_{\rho}(x_{k+1};x_k) + \frac{\beta_{k+1}}{2} \|\Delta x_{k+1}\|^2.
\]
Finally, using \eqref{initial_decrease}, we get the decrease in \eqref{decrease}.
This proves our statement.\qed
}

\medskip

\noindent \textbf{Proof of Lemma \ref{bounded_gradient}.} 
Using the optimality condition for $x_{k+1}$, we have:

\vspace{-0.2cm}

\[
    \nabla f(x_{k})+{J_F(x_{k})}^T(\rho l_F(x_{k+1};x_k))+\beta_{k+1}(x_{k+1}-x_{k})=0.
\]

\noindent It then follows, by exploiting the definition of $\mathcal{P}_{\rho}$ and the properties of the derivative, that:

\vspace{-0.3cm}

\begin{align*}
     \nabla\mathcal{P}_{\rho}(x_{k+1})=&\nabla f(x_{k+1})+{J_F(x_{k+1})}^T\big(\rho F(x_{k+1})\big)\\
     = & \nabla f(x_{k+1})-\nabla f(x_k)+\rho\big({J_F(x_{k+1})}-{J_F(x_{k})}\big)^TF(x_{k+1})-\beta_{k+1}\Delta x_{k+1}\\
     & +\rho J_F(x_{k})^T\big( F(x_{k+1})-l_F(x_{k+1;x_k})\big).
\end{align*}

\vspace{-0.1cm}

\noindent Using the triangle inequality, we further get:

\vspace{-0.3cm}

\begin{align} \label{another_bound_grad}
    \|\nabla\mathcal{P}_{\rho}(x_{k+1})\| \leq & \|\nabla f(x_{k+1})-\nabla f(x_k)\|+\rho\|F(x_{k+1})\|\|{J_F(x_{k+1})}-{J_F(x_{k})}\| \nonumber \\
     & +\beta_{k+1}\|\Delta x_{k+1}\| +\rho\|J_F(x_{k})\|\| F(x_{k+1})-l_F(x_{k+1};x_k)\| \nonumber\\
     {\overset{{\text{Ass. } \ref{assump2}}}{\leq}}& \left(L_f+\rho\|F(x_{k+1})\| L_F +\beta_{k+1}\right)\|\Delta x_{k+1}\| + \frac{\rho M_FL_F}{2} \|\Delta x_{k+1}\|^2, 
\end{align}

\vspace{-0.2cm}

\noindent where in the second inequality we use that  $\| F(x_{k+1})-l_F(x_{k+1};x_k)\|\leq L_F/2\|\Delta x_{k+1}\|^2$.
Furthermore, we have:
\begin{align*}
    \frac{\rho}{2}\|F(x_{k+1})\|^2 &= \mathcal{P}_{\rho}(x_{k+1}) - f(x_{k+1})\\
    & \leq \mathcal{P}_{\rho}(x_{k+1}) - \bar{f}.
\end{align*}
It then follws that:
\[
\rho\|F(x_{k+1})\| \leq \sqrt{2\rho} \sqrt{\mathcal{P}_{\rho}(x_{k+1}) - \bar{f}} \overset{\eqref{decrease}}{\leq} \sqrt{2\rho} \sqrt{\mathcal{P}_{\rho}(x_{k}) - \bar{f}}.
\]
Therefore, we get:
\begin{align*}
     \|\nabla\mathcal{P}_{\rho}(x_{k+1})\| & \leq \left(L_f+ L_F \sqrt{2\rho} \sqrt{\mathcal{P}_{\rho}(x_{k}) - \bar{f}} + \beta_{k+1}\right)\|\Delta x_{k+1}\| + \frac{\rho M_FL_F}{2} \|\Delta x_{k+1}\|^2.
\end{align*}
Moreover, using in \eqref{another_bound_grad} the fact that   $\| F(x_{k+1})-l_F(x_{k+1};x_k)\|\leq \|F(x_{k+1})-F(x_k)\|+ \|J_F(x_k)\Delta x_{k+1}\|\leq 2 M_F \|\Delta x_{k+1}\|$, the second claim follows.\qed

\medskip

\noindent \textbf{Proof of Lemma \ref{added_lemma}.}
\eqref{lem_item1} Since $\{x_k\}_{k\geq0}$ is bounded, then there exists   a convergent subsequence  $\{x_k\}_{k\in\mathcal{K}}$ such that $\lim_{k\in\mathcal{K}}{x_k}=x_{\rho}^*$. Hence $\Omega$ is nonempty. Moreover, $\Omega$ is compact since it is bounded and closed.
On the other hand, for any $x_{\rho}^*\in\Omega$, there exists a sequence of increasing integers $\mathcal{K}$ such that $\lim_{k\in\mathcal{K}}{x_k}=x_{\rho}^*$ and using \eqref{bounde_gradient} and $\lim_{k} \|\Delta x_k\|=0$ (see proof of Theorem \ref{main_result}), it follows that:

\vspace{-0.2cm}

\[
\|\nabla\mathcal{P}_{\rho}(x_{\rho}^*)\|=\lim_{k\in\mathcal{K}}{\|\nabla\mathcal{P}_{\rho}(x_k)\|}=0.
\]

\vspace{-0.1cm}

\noindent Hence, $x_{\rho}^*\in\text{crit }\mathcal{P}_{\rho}$ and $0\leq \lim_{k\to\infty}{\text{dist}(x_k,\Omega)}\leq\lim_{k\in\mathcal{K}}{\text{dist}(x_k,\Omega)}=\text{dist}(x_{\rho}^*,\Omega)=0$.\\
\noindent \eqref{lem_item2} Since $\mathcal{P}_{\rho}(\cdot)$ is continuous and $ \{\mathcal{P}_{\rho}(x_k)=P_k\}_{k\geq0}$ converges to $P^{*}$, then any subsequence $\{\mathcal{P}_{\rho}(x_k)=P_k\}_{k\in\mathcal{K}}$ that converges has the same limit $P^{*}$.
This concludes our proof.\qed

\medskip

\noindent \textbf{Proof of Lemma \ref{finite_length}.} From Lemma \ref{lemma3} and \eqref{bar_gamma}, we have: 
\vspace*{-0.2cm}
\begin{align}\label{llyap}
     P_{k+1}-P_{k}&{\overset{{\text{Lemma } \ref{lemma3}}}{\leq}}-\frac{\ubar{\beta}}{2}\|\Delta x_{k+1}\|^2.
\end{align}

\vspace{-0.2cm}

\noindent  Since $ P_{k}\to P^{*}$ and  $\{P_{k}\}_{k\geq0}$ is  decreasing to $P^{*}$, then  the error sequence $\{\mathcal{E}_{k}\}_{k\geq0}$ is nonnegative, monotonically decreasing and converges to $0$. We distinguish  two cases.

\vspace{0.1cm}

\noindent \textbf{{Case 1}}: There exists  $k_1\geq 1$ such that $\mathcal{E}_{k_1}=0$. Then, $\mathcal{E}_{k}=0 \;  \forall k\geq k_1$ and it follows that:

 \vspace{-0.2cm}
 
\[
\|x_{k+1}-x_{k}\|^2\leq\frac{2}{\ubar{\beta}}(\mathcal{E}_{k}-\mathcal{E}_{k+1})=0 \hspace{0.3cm}\forall k\geq k_1.
\]

\vspace{-0.15cm}

\noindent Since the sequence $\{x_{k}\}_{k\geq0}$ is bounded:
 $\sum_{k=1}^{\infty}{\|\Delta x_{k}\|}=\sum_{k=1}^{k_1}{\|\Delta x_{k}\|} <\infty.$

 \vspace{0.1cm}
 
 \noindent \textbf{{Case 2}}: The error $\mathcal{E}_{k}>0 \;  \forall k\geq0$. Then,  there exists  $k_1=k_1(\epsilon,\tau)\geq 1$  such that $\forall k\geq k_1$ we have $\text{dist}(x_k,\Omega)\leq \epsilon$,  $P^{*}<\mathcal{P}_{\rho}(x_k)<P^{*}+\tau$
 and

 \vspace{-0.2cm}
 
 \begin{equation}\label{KL}
     \varphi'(\mathcal{E}_{k})\|\nabla\mathcal{P}_{\rho}(x_{k})\|\geq1,
 \end{equation}
\noindent where $ \epsilon>0, \tau>0$ and $\varphi\in\Psi_{\tau}$ are well defined and correspond to those in Definition \ref{def2}, recall that  $\mathcal{P}_{\rho}(\cdot)$ satisfies the KL property on $\Omega$. Since $\varphi$ is concave, we have $\varphi(\mathcal{E}_{k})-\varphi(\mathcal{E}_{k+1})\geq\varphi'(\mathcal{E}_{k})(\mathcal{E}_{k}-\mathcal{E}_{k+1})$. Then, from \eqref{llyap} and \eqref{KL} we get:
 \begin{align*}
 \|x_{k+1}-x_{k}\|^2
 &\leq\varphi'(\mathcal{E}_{k})\|x_{k+1}-x_{k}\|^2\|\nabla\mathcal{P}_{\rho}(x_{k})\|\nonumber\\
 &\leq\frac{2}{\ubar{\beta}}\varphi'(\mathcal{E}_{k})(\mathcal{E}_{k}-\mathcal{E}_{k+1})\|\nabla\mathcal{P}_{\rho}(x_{k})\|\leq\frac{2}{\ubar{\beta}}\Big(\varphi(\mathcal{E}_{k})-\varphi(\mathcal{E}_{k+1})\Big)\|\nabla\mathcal{P}_{\rho}(x_{k})\|.
 \end{align*}

 \vspace{-0.2cm}
 
\noindent  Note that for given $a,b,c\geq0$, if we have $ {a^2}\leq2 b\times c$, and recognizing that we always have $2 b\times c\leq(b+c)^2$, then  $ {a^2}\leq (b+c)^2$, which in turn implies $a\leq b+c$.  It follows that for any $\theta>0$, we have (by taking $a=\|\Delta x_{k+1}\|$, $b=\frac{\theta}{\ubar{\beta}}\Big(\varphi(\mathcal{E}k)-\varphi(\mathcal{E}{k+1})\Big)$, and $c=\frac{1}{\theta}\|\nabla\mathcal{P}_{\rho}(x_k)\|$):
\begin{align}\label{lmit}
    {\|\Delta x_{k+1}\|}\leq&\frac{\theta}{\ubar{\beta}}\Big(\varphi(\mathcal{E}_{k})-\varphi(\mathcal{E}_{k+1})\Big)+\frac{1}{\theta}\|\nabla\mathcal{P}_{\rho}(x_{k})\|.
\end{align}

 \vspace{-0.2cm}
 
\noindent Furthermore, we have:
$
    \| \nabla{P}(x_{k})\|
    {\overset{{}}{\leq}}\Gamma_{\text{max}}\|\Delta x_{k}\|.
$
Then, \eqref{lmit} becomes:
\begin{align*}
    {\|\Delta x_{k+1}\|}\leq&\frac{\theta}{\ubar{\beta}}\Big(\varphi(\mathcal{E}_{k})-\varphi(\mathcal{E}_{k+1})\Big)+\frac{\Gamma_{\text{max}}}{\theta}\|\Delta x_{k}\|.
\end{align*}

 \vspace{-0.2cm}
 
\noindent Let us now choose $\theta>0$ so that $0<\frac{\Gamma_{\text{max}}}{\theta}<1$ and define the parameter $\delta_0$ as: $\delta_0=1-\frac{\Gamma_{\text{max}}}{\theta}>0$. Then, by
summing up the above inequality from $k=\ubar{k}$ to $k=K$ and using the property: $\sum_{k=\ubar{k}}^{K}{\|\Delta x_{k}\|}=\sum_{k=\ubar{k}}^{K}{\|\Delta x_{k+1}\|}+\|\Delta x_{\ubar{k}}\|-\|\Delta x_{{K+1}}\|$, we get: 
\begin{align*}
   \sum_{k=\ubar{k}}^{K}&\|\Delta x_{k+1}\| \leq \frac{\theta}{\ubar{\beta}\delta_0}\Big(\varphi(\mathcal{E}_{\ubar{k}})-\varphi(\mathcal{E}_{K+1})\Big)+\frac{\Gamma_{\text{max}}}{\theta\delta_0}\|\Delta x_{\ubar{k}}\| -\frac{\Gamma_{\text{max}}}{\theta\delta_0}\|\Delta x_{{K+1}}\|.
\end{align*}

 \vspace{-0.2cm}
 
\noindent Using the fact that  $\{\mathcal{E}_{k}\}_{k\geq k_1}$ is monotonically decreasing and that the function $\varphi$ is positive and increasing, which  yields $\varphi(\mathcal{E}_{k})\geq\varphi(\mathcal{E}_{k+1})>0$,  then:
\begin{align*}
    \sum_{k=\ubar{k}}^{K}{\|\Delta x_{k+1}\|}\leq&\frac{\theta}{\ubar{\beta}\delta_0}\varphi(\mathcal{E}_{\ubar{k}})+\frac{\Gamma_{\text{max}}}{\theta\delta_0}\|\Delta x_{\ubar{k}}\|.
\end{align*}
It is clear that the right-hand side of the above inequality is bounded for any $K\geq\ubar{k}$. Letting  $K\to\infty$, we get that:
$
    \sum_{k=\ubar{k}}^{\infty}{\|\Delta x_{k+1}\|}<\infty.
$
Since the sequence  $\{x_{k}\}_{k\geq0}$ is bounded, it follows that:
$
    \sum_{k=1}^{\ubar{k}}{\|\Delta x_{k}\|}<\infty.
$
Hence: $ \sum_{k=1}^{\infty}{\|\Delta x_{k}\|}<\infty$. 
Let $m, n\in\mathcal{Z}_{+}$ such that $n\geq m$, we have:
\begin{align*}
    \|x_n-x_m\|=\|\sum_{k=m}^{n-1}{\Delta x_{k+1}}\|
    \leq\sum_{k=m}^{n-1}{\|\Delta x_{k+1}\|}.
\end{align*}
Since  $ \sum_{k=0}^{\infty}{\|\Delta x_{k+1}\|}<\infty$, it follows that $\forall \varepsilon>0, \exists N\in\mathcal{Z}_{+}$ such that $\forall m, n\geq N$ where $n\geq m$, we have: $ \|x_n-x_m\|\leq\varepsilon$. This implies that $\{x_k\}_{k\geq0}$ is a Cauchy sequence and converges. Moreover, by Theorem \ref{main_result}, $\{x_k\}_{k\geq0}$  converges to a critical point of $\mathcal{P}_{\rho}(\cdot)$.\qed

\end{document}